
\documentclass{article}

\usepackage[a4paper, total={6in, 8in}]{geometry}

\usepackage{lipsum}

\usepackage[authoryear]{natbib}
\bibpunct{(}{)}{,}{a}{,}{;} 

\usepackage[english]{babel}
\usepackage[utf8]{inputenc}
\usepackage[T1]{fontenc}

\usepackage{float}

\usepackage[pdftex]{graphicx}

\usepackage{bm}
\usepackage{amsmath}
\usepackage{amsfonts}
\usepackage{amsthm} 

\usepackage[ruled,vlined,resetcount]{algorithm2e}
\usepackage{algorithmic}
\SetKw{KwFrom}{from}
\SetKwHangingKw{Point}{\raisebox{0.25ex}{\scalebox{0.5}{$\blacksquare$}}}
\SetKwInOut{Param}{Parameters}
\SetKw{Return}{Return}

\usepackage{mathrsfs} 

\usepackage[justification=centering]{caption}
\usepackage{subcaption} 

\usepackage{colortbl}
\usepackage[dvipsnames]{xcolor}

\usepackage{authblk}

\usepackage[title]{appendix}

\usepackage[pdftex,colorlinks=true,linkcolor=blue,citecolor=blue,urlcolor=blue]{hyperref}
\usepackage[nameinlink]{cleveref}


\newcommand{\Diag}{\mathrm{Diag}}

\newcommand{\trace}{\mathop{\text{Trace}}}

\newcommand{\DD}{\mathrm{D}}
\newcommand{\TT}{\mathrm{T}}

\newcommand{\q}[1]{``#1''}%
\newcommand{\R}{\mathbb{R}} 
\newcommand{\N}{\mathbb{N}} 

\newcommand{\e}{\mathbb{E}} 
\newcommand{\cov}{\mathrm{Cov}} 

%


\newtheorem{proposition}{Proposition}[section]

\newcommand{\mc}[1]{\mathcal{#1}}%


\crefname{equation}{eq.}{eqs.}

	\title{Geostatistics for large datasets on Riemannian manifolds: a matrix-free approach}

	\author[1,2]{Mike Pereira%
		\thanks{Corresponding author: \texttt{mike.pereira@minesparis.psl.eu}}}  
	\author[2]{Nicolas Desassis}
	\author[3]{Denis Allard}
	
	\affil[1]{Department of Mathematical Sciences, Chalmers University of Technology and University of Gothenburg \\
		Gothenburg, Sweden}
	\affil[2]{Mines Paris, PSL University, Centre for geosciences and
geoengineering, Fontainebleau, France}
	\affil[3]{Biostatistics and Spatial Processes (BioSP), INRAE, Avignon, France}

	
\date{October $27^{\text{th}}$, 2022}
\begin{document}

\maketitle

\begin{abstract}
	Large or very large spatial (and spatio-temporal) datasets have become common place in many environmental and climate studies. These data are often collected in non-Euclidean spaces (such as the planet Earth) and they often present nonstationary anisotropies.  This paper proposes a generic approach to model Gaussian Random Fields (GRFs) on compact Riemannian manifolds that bridges the gap between existing works on nonstationary GRFs and random fields on manifolds. This approach can be applied to any smooth compact manifolds, and in particular to any compact surface. By defining a Riemannian metric that accounts for the preferential directions of correlation, our approach yields an interpretation of the non stationary geometric anisotropies as resulting from “local” deformations of the domain. We provide scalable algorithms for the estimation of the parameters and for optimal prediction by kriging and simulation able to tackle very large grids. Stationary and nonstationary illustrations are provided.
\end{abstract}

\begin{keywords}
	Gaussian Process; Laplace-Beltrami operator; nonstationarity; anisotropy; finite elements 
\end{keywords}



\section{Introduction}
\label{sec:intro}    

Large or very large spatial (and spatio-temporal) datasets have become common place in many environmental and climate studies. Various approaches have been proposed by the statistical community to tackle the \q{big $N$} challenge in ways that properly acknowledge the spatial and spatio-temporal dependence structures usually observed in these datasets. Recently, several competitions have been organized to compare methods and algorithms in this context \citep{heaton2019case,huang2021competition} which provide an excellent overview of state-of-the-art methods for analyzing large spatial datasets. Most algorithms rely on approximation methods of Gaussian Processes, also known as Gaussian Random Fields (GRFs). In their conclusion, the organizers of the first competition note that \q{spatial data may exhibit anisotropy, nonstationarity, large and small range spatial dependence as well}.
Despite its long history, modeling nonstationary spatial datasets remains a challenge. \citet{sampson1992nonparametric} proposed a space deformation approach further developed for instance in \citet{perrin2000reducing} and \citet{fouedjio2015estimation}. The main difficulty with this approach is to estimate a valid, global, deformation of the domain, which in practice is not guaranteed to exist. \citet{paciorek2006spatial} (see also \citet{fouedjio2016generalized}) introduced a class of nonstationary covariance functions based on the kernel convolution approach of \citet{higdon1999non}, later generalized in \citet{porcu2009quasi}. Our approach rather builds on the so-called \q{SPDE approach} introduced in the seminal work of \citet{lindgren2011explicit}, which relates GRFs characterized with Matérn covariance functions to stationary solutions of a specific Stochastic Partial Differential Equation (SPDE). This approach can be extended to the nonstationary case by allowing spatially varying coefficients of the SPDE, see for instance \citet{fuglstad2015exploring,fuglstad2015does} among other possible references.
Another challenge often faced when analyzing environmental or climate data is to work on non-Euclidean domains. In particular, methods for analyzing data on spheres has received a lot of attention, see \citet{marinucci2011random}, \citet{jeong2017spherical} and \citet{porcu202130} for recent reviews and \citet{rayner2020eustace} for a recent application using the SPDE approach dealing with extremely large datasets. Methods developed for spheres depend usually on specific properties, such as expansion into the spherical harmonics \citep{Emery2019simulating,lang2015isotropic,Lantuejoul2019} or the use of arc distances to define valid covariance models \citep{gneiting2013strictly,Huang2011}. 

This paper aims at bridging the gap between existing works on nonstationary GRFs and random fields on manifolds. Specifically, we propose a generic approach to model GRFs on compact Riemannian manifolds and we provide scalable algorithms for their optimal prediction by kriging and simulation.
Our approach is based on two main ingredients. First, random fields are  defined through expansions in the eigenfunctions of the Laplace--Beltrami operator on the Riemannian manifold which are, in some cases, solutions to some particular SPDE. Then, we build finite element approximation of these GRFs. This construction allows to perform optimal prediction, simulation (including conditional) and estimation of the parameters using scalable algorithms.

For this purpose, we define a Riemannian metric that accounts for the preferential directions of correlation of the nonstationary GRF. This method yields an interpretation of the  \q{local anisotropies} as resulting from \q{local} deformations of the domain, in striking contrast to both the space deformation and the kernel convolution approaches. The resulting fields can be seen as a direct generalization of the construction for nonstationary random fields proposed in \citet{fuglstad2015exploring}.

Our approach can be applied to any smooth compact manifold, and in particular to any compact surface or hypersurface. It shares clear similarities with the work of \citet{borovitskiy2020matern}, but in contrast,  is not restricted to Whittle--Matérn fields since in our approach the GRF is characterized by its spectral density, whose inverse is restricted to belong to the family of positive polynomials. Besides, our approach does not rely on the explicit computation of the eigenfunctions and eigenvalues of the Laplace--Beltrami operator, and can be seen as providing a theoretical motivation to the method developed in \citet{pmlr-v130-borovitskiy21a} to deal with Gaussian processes on graphs.

The flexibility of our approach does not result in increased computational costs. Indeed, we show how prediction and conditional simulations can be performed through a so-called \q{matrix-free} approach. This approach, unlike classical geostatistical algorithms, does not require to build and store possibly large covariance matrices, but instead relies only on products between some sparse matrices and vectors. This in turn ensures the scalability of this method, thus paving the way to efficient nonstationary geostatistics for large datasets. We illustrate our approach with 2D and 3D synthetic examples and grids with more that $10^7$ nodes for the 3D cases.

The organization of the paper is the following. The GRF model is presented in \Cref{sec:rf_approx}, along with its finite element approximation and covariance function. Kriging and simulation algorithms are provided in  \Cref{sec:krig}. Section \ref{sec:estim} shows how the parameters can be estimated using maximum likelihood. All methods are summarized as Algorithms.  Stationary and nonstationary illustrations are then provided in \Cref{sec:illustration}. We conclude with some final words in \Cref{sec:disc}. Proofs and technical details are deferred to the Appendix. Throughout this paper, vectors and matrices will be denoted in bold fonts. The superscript $^T$ denotes the transpose operation on matrices or vectors. $\Diag$ is the operator that transforms a vector of length $n$ into an $n \times n$ matrix whose diagonal elements are those of the vector and whose off-diagonal elements are 0. $\bm I_p$ is the $p \times p$ identity matrix. $\vert \bm A \vert$ is the determinant of the square matrix $\bm A$ and $\Vert\cdot\Vert$ denotes the Euclidean norm. 


\section{Random Fields on Riemannian Manifolds}
\label{sec:rf_approx}

\subsection{Definition and Finite Element Approximation}\label{sec:def_rf}

A generic approach to define and characterize GRFs on Riemannian manifolds has been proposed in \citet{borovitskiy2020matern} and in \citet{ lang2021galerkin} and is now briefly recalled. Let $\mathcal{D}$ be a smooth compact manifold of dimension $d$ equipped with a Riemannian metric $g$, and let $f:\R_+\rightarrow\R_+$ be a function such that for some $\beta>d/2$, $\vert \lambda^{\beta}f(\lambda)\vert$ is bounded as $\lambda\rightarrow +\infty$. A centered GRF $\mc{Z}$ is constructed on the resulting Riemannian manifold $(\mathcal{D},g)$ through the expansion
\begin{equation}
	\mc{Z}=\sum_{k\in\mathbb{N}} f^{1/2}(\lambda_k)w_k e_k,
	\label{eq:lap_field}
\end{equation}
where $f^{1/2}:\R_+\rightarrow\R$ is a function such that $\left(f^{1/2}\right)^2=f$ on $\R_+$, $\lbrace w_k\rbrace_{k\in\mathbb{N}}$ is a sequence of independent standard Gaussian variables, $\lbrace \lambda_k \rbrace_{k\in\N}$ denote the set of eigenvalues of the Laplace--Beltrami operator $-\Delta$ on $(\mathcal{D},g)$, and $\lbrace e_k \rbrace_{1\le k\le \infty}$ denote the associated eigenfunctions.
In order to get a feeling of what \Cref{eq:lap_field} represents, one could recall that on $\mathbb{R}^d$ the eigenfunctions of the Laplacian are all the members of the uncountable family of functions $\{ e^{-i \langle \bm \omega, \bm x \rangle} : \bm \omega \in \mathbb{R}^d\}$. In this case, the Whittle-Matérn random fields are solution to the SPDE $(\kappa^2 - \Delta)^{\alpha/2} {\cal Z} = {\cal W}$ where $\cal W$ is the white noise process on $\mathbb{R}^d$ and the associated  spectral density is $f^{1/2}(\Vert \bm w\Vert) = (\kappa^2 + \Vert \bm \omega \Vert)^{-\alpha/2}$.

Going back to our definition of GRFs on compact manifolds,  the eigenvalues and eigenfunctions of the Laplace--Beltrami operator are not known in general. 
A first approach, proposed by \cite{borovitskiy2020matern}, consists of computing them numerically by solving (approximately) the corresponding eigenvalue problems.
Instead, we approximate $\mc{Z}$  using a finite element approach as in \citet{lindgren2011explicit} and \citet{lang2021galerkin}, as this method will naturally yield scalable algorithms for prediction and sampling tasks. First, the manifold $\mathcal{D}$ is triangulated using $n$ nodes $\bm s_1, \dots, \bm s_n \in\mathcal{D}$ and a family of compact support approximation functions $\lbrace \psi_i\rbrace_{1\le i\le n}$ is defined over $\mathcal{D}$, where each $\psi_i$ is the piecewise linear function equal to $1$ at the node $\bm s_i$ and $0$ at all the other nodes. Then, $\mc{Z}$ is approximated by a linear combination $Z$ defined as
\begin{equation}
	Z=\sum_{i=1}^{n} z_i \psi_i,
	\label{eq:z_galer}
\end{equation}
where for any $i\in\lbrace 1, \dots, n\rbrace$,  $z_i=Z(\bm s_i)$ is the weight associated with the basis function $\psi_i$.

The weights $\bm Z = (z_1, \dots, z_n)^T$ are chosen so that $Z$ can also be written using the same expansion as the one defining the original field $\mc{Z}$ in~\eqref{eq:lap_field}, but replacing now the eigenfunctions $\lbrace e_k\rbrace_{k\in\mathbb{N}}$ and eigenvalues $\lbrace \lambda_k\rbrace_{k\in\mathbb{N}}$ of $-\Delta$ by those of its Galerkin approximation (see \Cref{appen:galerkin_approx} for more details). This particular choice yields an explicit formula to compute these weights, see  \cref{prop:weights} below. We first introduce $\bm C$ and $\bm F$, the mass and stiffness matrices respectively defined by
\begin{equation}
	[\bm C]_{ij}=(\psi_i, \psi_j), \quad [\bm F]_{ij}=(\nabla\psi_i, \nabla\psi_j), \quad 1\le i,j\le n,
\end{equation}
where $(\cdot, \cdot)$ denotes the $L^2$ inner product on the Riemannian manifold (see \Cref{appen:rm}). Note that since each basis function $\psi_i$ ($1\le i\le n$) is zero for every node of the triangulation except one, the resulting matrices $\bm C$ and $\bm F$ are sparse. Let then  $\sqrt{\bm C}\in\R^{n\times n}$ be a matrix such that $\sqrt{\bm C}(\sqrt{\bm C})^T=\bm C$, and let $\bm S$ be the matrix defined by
\begin{equation}
	\bm S = (\sqrt{\bm C})^{-1} \bm F (\sqrt{\bm C})^{-T}.
	\label{eq:def_S}
\end{equation} 
Note that since $\bm S$ is real, symmetric and positive semi-definite, it is diagonalizable with non-negative eigenvalues. Therefore it can be written as 
\begin{equation*}
	\bm S=\bm V \Diag(\lambda_1, \dots, \lambda_n)\bm V^T,
	\label{eq:eigenS}
\end{equation*}
where $\lambda_1, \dots, \lambda_n$ denote the eigenvalues of $\bm S$ and $\bm V$ is an orthogonal matrix whose columns are eigenvectors of $\bm S$.

\begin{proposition}
	Let $\bm Z$ be the vector of weights as in \Cref{eq:z_galer}. Then, the vector $\bm Z$ is a centered Gaussian vector with covariance matrix $\bm\Sigma$ given by
	\begin{equation}
		\bm\Sigma=(\sqrt{\bm C})^{-T}f(\bm S)(\sqrt{\bm C})^{-1},
		\label{eq:cov_mat_g}
	\end{equation}
	where $f(\bm S)$ is a so-called matrix function, defined from the eigendecomposition of $\bm S$ as
	$$f(\bm S)=\bm V \Diag\big(f(\lambda_1), \dots, f(\lambda_n)\big)\bm V^T.$$ 
	\label{prop:weights}
\end{proposition}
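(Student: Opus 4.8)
The plan is to spell out precisely what it means for $Z$ to admit \emph{the same expansion as $\mc{Z}$ in \eqref{eq:lap_field}, but with the eigenpairs of $-\Delta$ replaced by those of its Galerkin discretization}, and then to read off the law of the coefficient vector $\bm Z$ by elementary linear algebra, using only the fact that a linear image of a standard Gaussian vector is a centered Gaussian vector.

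First I would recall the Galerkin discretization of the eigenvalue problem $-\Delta e = \lambda e$: one seeks pairs $(\lambda, u) \in \R_+ \times V_n$, with $V_n = \spn{\psi_1, \dots, \psi_n}$, such that $(\nabla u, \nabla \psi_j) = \lambda (u, \psi_j)$ for every $j \in \{1, \dots, n\}$. Writing $u = \sum_i v_i \psi_i$ and $\bm v = (v_1, \dots, v_n)^T$, this is exactly the generalized eigenproblem $\bm F \bm v = \lambda \bm C \bm v$. The change of unknowns $\bm v = (\sqrt{\bm C})^{-T} \bm u$ turns it into the ordinary eigenproblem $\bm S \bm u = \lambda \bm u$ for the matrix $\bm S$ of \eqref{eq:def_S}; hence its solutions are the $n$ pairs $(\lambda_k, \bm v_k)$ with $\bm v_k = (\sqrt{\bm C})^{-T} \bm V_{\cdot k}$, where $\lambda_k$ and $\bm V_{\cdot k}$ denote the eigenvalues and eigenvectors of $\bm S$. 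The corresponding Galerkin eigenfunctions $e_k^{(n)} := \sum_{i=1}^n [\bm v_k]_i \psi_i$ satisfy $(e_k^{(n)}, e_\ell^{(n)}) = \bm v_k^T \bm C \bm v_\ell = \bm V_{\cdot k}^T \bm V_{\cdot \ell} = \delta_{k\ell}$, i.e. they form an $L^2$-orthonormal family, mirroring the normalization of the true eigenfunctions $\{e_k\}$.

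With this in hand, the defining property of the weights (detailed in \Cref{appen:galerkin_approx}) reads
\[
	Z = \sum_{k=1}^n f^{1/2}(\lambda_k)\, w_k\, e_k^{(n)}, \qquad \{w_k\}\ \text{i.i.d.}\ \mc{N}(0,1),
\]
and identifying the coefficients of each $\psi_i$ on both sides of \eqref{eq:z_galer} gives $z_i = \sum_{k=1}^n f^{1/2}(\lambda_k) w_k [\bm v_k]_i$, that is, in matrix form,
\[
	\bm Z = (\sqrt{\bm C})^{-T}\, \bm V\, \Diag\!\big(f^{1/2}(\lambda_1), \dots, f^{1/2}(\lambda_n)\big)\, \bm w, \qquad \bm w = (w_1, \dots, w_n)^T \sim \mc{N}(\bm 0, \bm I_n).
\]
Hence $\bm Z$ is a centered Gaussian vector, and its covariance is $(\sqrt{\bm C})^{-T} \bm V \Diag\big(f(\lambda_1), \dots, f(\lambda_n)\big) \bm V^T (\sqrt{\bm C})^{-1} = (\sqrt{\bm C})^{-T} f(\bm S) (\sqrt{\bm C})^{-1}$, which is \eqref{eq:cov_mat_g}.

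The only genuinely delicate step is the second paragraph: correctly setting up the Galerkin eigenproblem and checking that the substitution $\bm u = (\sqrt{\bm C})^T \bm v$ simultaneously converts $\bm F \bm v = \lambda \bm C \bm v$ into $\bm S \bm u = \lambda \bm u$ and preserves the $L^2$-orthonormality of the discrete eigenfunctions. Once this dictionary between the functional expansion and the eigendecomposition of $\bm S$ is in place — and once one notices that the $\sqrt{\bm C}$ factors are exactly what reconciles the $\ell^2$ geometry of the coefficient vectors with the $L^2$ geometry of $V_n$ — the distributional conclusion follows immediately from the stability of Gaussian vectors under linear maps, and the remaining computations are routine bookkeeping.
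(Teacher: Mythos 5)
Your derivation is correct. The paper does not actually prove \Cref{prop:weights} itself (it defers to \citet{pereiraPhd2019} and \citet{lang2021galerkin}), but your argument is exactly the intended one given the construction sketched in \Cref{appen:galerkin_approx}: you correctly reduce the generalized eigenproblem $\bm F\bm v=\lambda\bm C\bm v$ to $\bm S\bm u=\lambda\bm u$ via $\bm v=(\sqrt{\bm C})^{-T}\bm u$, verify $L^2$-orthonormality of the discrete eigenfunctions, and read off $\bm Z$ as a linear image of a standard Gaussian, from which \eqref{eq:cov_mat_g} follows since $\Diag(f^{1/2}(\lambda_k))\Diag(f^{1/2}(\lambda_k))^T=\Diag(f(\lambda_k))$. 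The only point you leave implicit is that when $\bm S$ has repeated eigenvalues the orthonormal eigenbasis $\bm V$ is not unique, but the law of $\bm Z$ is unaffected by rotation invariance of $\bm w$, so nothing is lost.
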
	
This result is  shown in \citet{pereiraPhd2019} and \citet{lang2021galerkin}. In the latter reference, a convergence result of the approximation of $\mathcal{Z}$ as the mesh size of the triangulation decreases is provided, thus further justifying this  approach. 

The matrix square-root $\sqrt{\bm C}$ can be computed using matrix functions or through a Cholesky decomposition. In practice however, $\sqrt{\bm C}$  is replaced by the so-called mass lumping approximation defined as the diagonal matrix with entries:
\begin{equation}
	[\sqrt{\bm C}]_{ii} = \sqrt{(\psi_i, 1)}, \quad 1\le i\le n.
	\label{eq:def_masslump}
\end{equation}
To ease the notations, but at the cost of a slight abuse of notation, the mass lumping approximation will also be denoted $\sqrt{\bm C}$ in the remainder of this text. As shown in \citet{lindgren2011explicit}, this approximation comes with negligible effect on the covariance of the resulting random field. It allows to readily have access to the  inverse of the square-root matrix  $\sqrt{\bm C}$ and yields
\begin{equation}
	[\bm S]_{ij}=\frac{(\psi_i, \psi_j)}{\sqrt{(\psi_i, 1)}\sqrt{(\psi_j, 1)}}, \quad 1\le i,j\le n,
	\label{eq:def_masslump_S}
\end{equation}
which ensures that the matrix $\bm S$ is also sparse.

The covariance matrix $\bm \Sigma$ in \Cref{eq:cov_mat_g} involves a matrix function defined above through the eigendecomposition of $\bm S$, which is notoriously computationally expensive. To avoid this, two particular cases can be considered.    If the function $f$ is  approximated by a polynomial, the resulting matrix function becomes a matrix polynomial, which can be computed without involving eigendecompositions. This is the rationale behind the Galerkin--Chebyshev approach proposed in \cite{lang2021galerkin}, where the function $f$ is replaced by its Chebyshev polynomial approximation over an interval containing the eigenvalues of $\bm S$. 

We propose here an alternative approach in the spirit of \citet{lindgren2011explicit} and \citet{rue2005gaussian}.  We assume that $f$ is the inverse of a polynomial $P$ taking positive values over  $\R_+$, i.e. $f=1/P$. Then the resulting precision matrix $\bm Q$ of the weights can be expressed as
\begin{equation}
	\bm Q=\bm\Sigma^{-1}=(\sqrt{\bm C})P(\bm S)(\sqrt{\bm C})^{T},
	\label{eq:prec_mat_cov}
\end{equation}
which again involves matrix polynomial instead of a matrix function. Computing the matrix $\bm Q$ can then be done by summing iterates of the matrix $\bm S$, resulting in a matrix whose sparsity depends on the degree of $P$: the higher the degree of $P$, the less sparse $\bm Q$ is. This approach, which we refer to as a \q{matrix free} approach, will be adopted in the algorithms  presented in \Cref{sec:krig} for prediction  and simulation.

\subsection{Second-order Characterization}

\subsubsection{Stationary and Approximately Stationary Covariance Functions}

Let us consider  the GRF $\mathcal{Z}$ defined by~\Cref{eq:lap_field} on some compact Riemannian manifold $(\mathcal{D}, g)$. It is straightforward to show that its covariance function $C_{\mc{Z}}$ can be written as:
\begin{equation}
	C_{\mc{Z}}(\bm x_1,\bm x_2)=\sum_{k\in\mathbb{N}}f(\lambda_k)e_k(\bm x_1)e_k(\bm x_2), \quad \bm x_1,\bm x_2\in\mathcal{D}.
	\label{eq:cov_lap_field}
\end{equation}
In the particular case where  $(\mathcal{D}, g)$  is an Euclidean domain equipped with the usual metric, \citet{solin2020hilbert} show that $C_{\mc{Z}}$ approximates the covariance of  a random field on $\mathcal{D}$ with radial spectral density $f^{1/2}$ (defined in~\Cref{eq:lap_field}). They also provide a uniform bound on the error between the actual covariance function of $\mc{Z}$ and the covariance function associated with $f^{1/2}$ which shows that the approximation improves as we move further away from the boundary of $\mathcal{D}$. Hence, in this case, $\mathcal{Z}$ approximates an isotropic GRF with covariance $C$ given by
\begin{equation}
	C_{\mc{Z}}(\bm x_1,\bm x_2)\approx C(\Vert \bm x_1-\bm x_2\Vert), \quad \bm x_1, \bm x_2\in\mathcal{D},
	\label{eq:cov_iso}
\end{equation}
where $C$ is the inverse Fourier transform of $\bm w \mapsto f^{1/2}(\Vert \bm w\Vert)$.	  

More generally, expansions similar to~\eqref{eq:cov_lap_field} have been used to characterize the covariance of random fields on (Riemannian) manifolds. For instance, \citet{lang2015isotropic} use it to describe covariance functions of random fields on the sphere (endowed with its usual metric), and show an explicit link between the regularity of the resulting field and the decay of the sequence $\lbrace f^{1/2}(\lambda_k)\rbrace_{k\in\N}$. On general compact Riemannian manifolds, \citet{borovitskiy2020matern} characterize their \q{Matérn Gaussian processes in the sense of Whittle} through covariance functions of the form~\eqref{eq:cov_lap_field}, by taking $f^{1/2}$ to be the spectral density of the usual Matérn covariance function  (i.e. as defined for isotropic random fields on $\R^d$). 
Hence, the field $\mathcal{Z}$ defined by~\eqref{eq:lap_field} with covariance function given by~\eqref{eq:cov_lap_field} can be seen as the counterpart, on the Riemannian manifold $(\mathcal{D}, g)$, of the random fields with radial spectral density $f^{1/2}$ on $\R^d$  and covariance function given by the inverse Fourier transform of $f^{1/2}$. Examples of sampled GRFs with Matérn covariance on different domains are presented in \Cref{fig:matern}.

\begin{figure}[thb]
	\centering
	\raisebox{2em}{\includegraphics[width=0.35\textwidth]{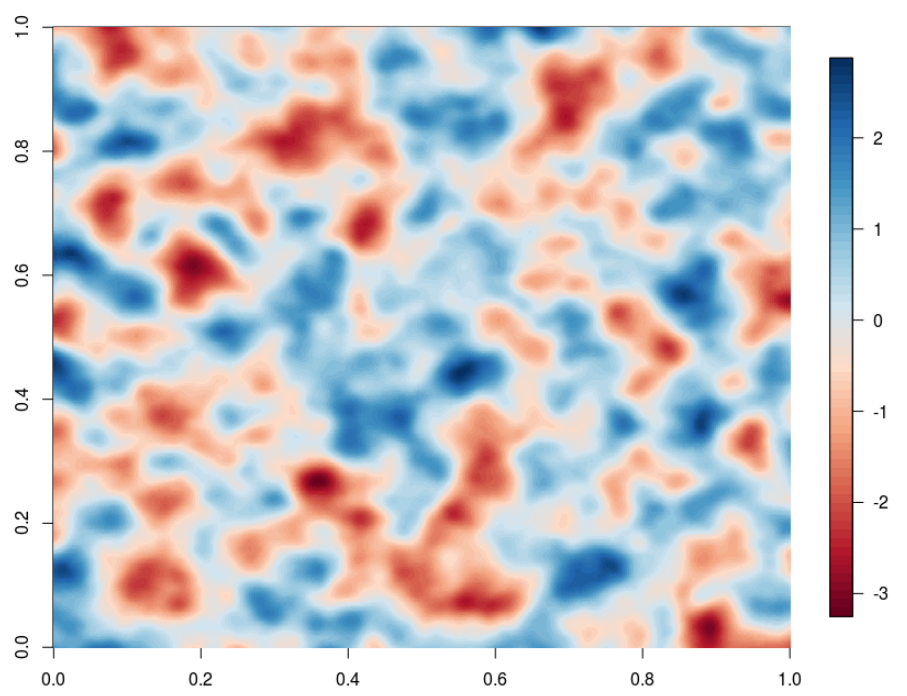}}
	\hspace{2em}
	\includegraphics[width=0.35\textwidth]{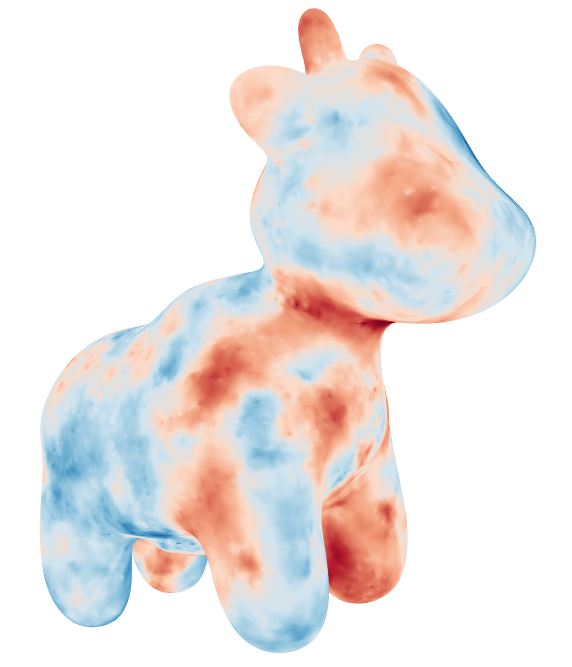} \\
	\includegraphics[width=0.35\textwidth]{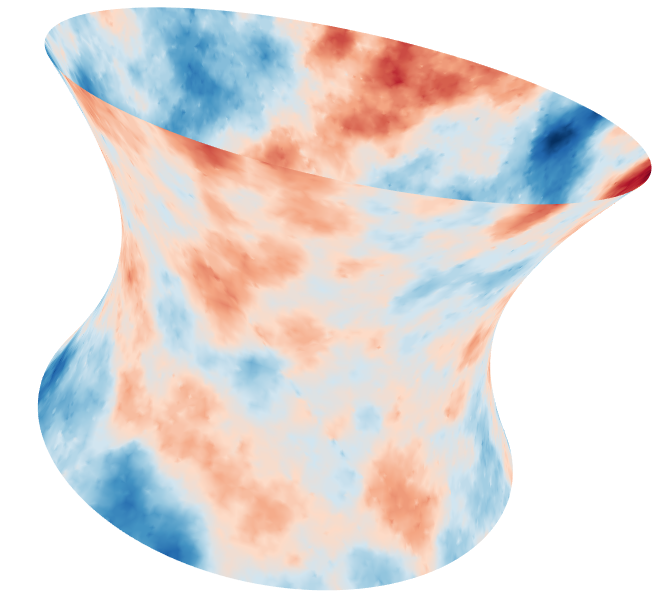}
	\hspace{2em}
	\includegraphics[width=0.35\textwidth]{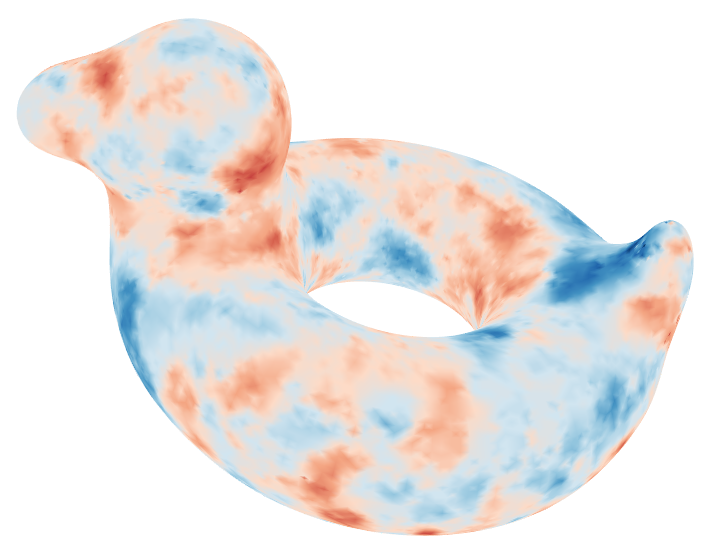}
	\caption{Example of GRFs with Matérn covariance function on the sphere and on surfaces shaped like a cow, a paraboloid and a duck-shaped swim ring.\label{fig:matern}}
\end{figure}

\subsubsection{Nonstationary Covariances}

The general construction of random fields on Riemannian manifold presented in the previous section can be used to define nonstationary models of GRFs, and in particular fields that exhibit \textit{local anisotropies}. 
Such fields are defined on Euclidean domains of dimension $d\in\lbrace 2,3\rbrace$  as follows: around each point of the domain, there is a preferential direction along which the range of highly correlated values is maximal, whereas it is minimal in the orthogonal direction(s). The angles defining the preferential directions are called anisotropy angles and the size of the ranges are called anisotropy ranges. These anisotropy parameters can be graphically represented by an ellipse/ellipsoid whose axes length and direction are respectively given by the anisotropy ranges and angles.

Following the approach described in \citet{pereiraPhd2019}, a GRF with local anisotropies on some bounded Euclidean domain $\mathcal{D}$ can be built by defining a GRF on a specific Riemannian manifold:  anisotropy angles and ranges can be used to define a metric tensor at each point of $\mathcal{D}$. In other words, at each $\bm p \in\mathcal{D}$, the metric is chosen so that it locally \q{deforms} $\mathcal{D}$ into a local domain where the anisotropy reduces to isotropy thanks to the composition of a rotation and a scaling that would turn an ellipse/ellipsoid into a circle/sphere (see \Cref{fig:def_ell}). This transformation results in a metric defined as
\begin{equation}
	g_{\bm p}(\bm u,\bm v)
	=\big(\bm D(\bm p)^{-1}\bm R(\bm p)^{-1} \bm u\big)^T\big(\bm D(\bm p)^{-1}\bm R(\bm p)^{-1} \bm v\big), \quad \bm u, \bm v \in\R^d,
	\label{eq:def_met_nstat}
\end{equation}
where $\bm D(\bm p)$ is the diagonal matrix whose entries are the anisotropy ranges at $\bm p \in \mathcal{D}$ and $\bm R(\bm p)$ is the rotation matrix defined from the anisotropy angles at $\bm p$.

\begin{figure}[thb]
	\centering
	\includegraphics[width=0.8\textwidth]{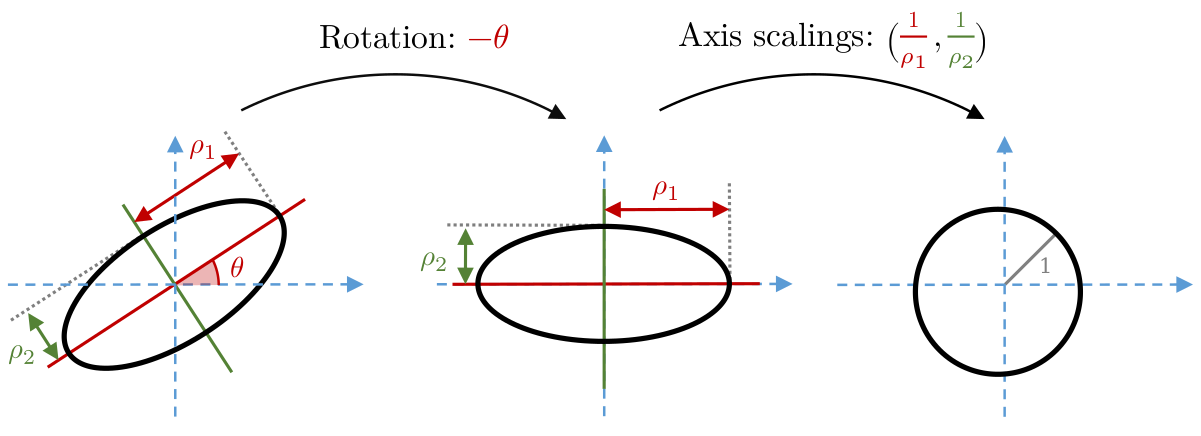}
	\caption{Deformation turning an entropy ellipse (with range parameters $(\rho_1, \rho_2)$ and angle $\theta$) into a circle.}
	\label{fig:def_ell}
\end{figure} 

To get the covariance properties of the field in the original domain $\mathcal{D}$ equipped with the metric~\eqref{eq:def_met_nstat}, we can first apply the deformation and then use~\eqref{eq:cov_iso}, to obtain
\begin{equation}
	C_{\mc{Z}}(\bm p,\bm p+\bm h)\approx C(g_{\bm p}(\bm p,\bm p+\bm h))=C(\Vert\bm D(\bm p)^{-1}\bm R(\bm p)^{-1} \bm h \Vert), 
	\label{eq:cov_nstat_approx}
\end{equation}
where $\bm h \in\R^d$ is some infinitesimal displacement vector around $\bm p$. It is then straightforward to check that such a covariance locally reproduces the desired anisotropy properties around $\bm p$ (see \citet{chiles1999geost} for details). An example of the type of nonstationary fields that can be sampled using this method is presented in \Cref{fig:nstat}.

\begin{figure}[thb]
	\centering
	\includegraphics[height=0.3\textheight]{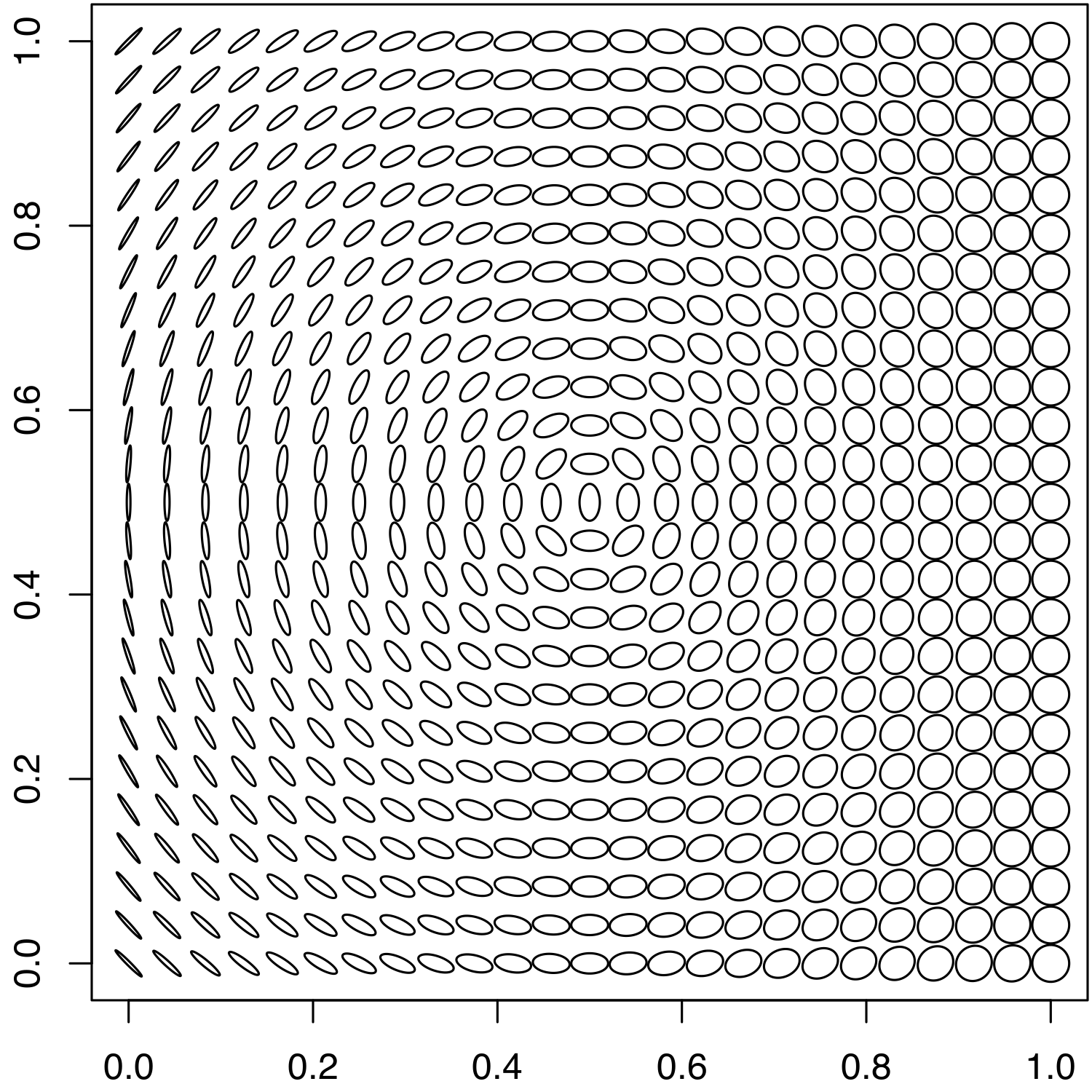}
	\includegraphics[height=0.3\textheight]{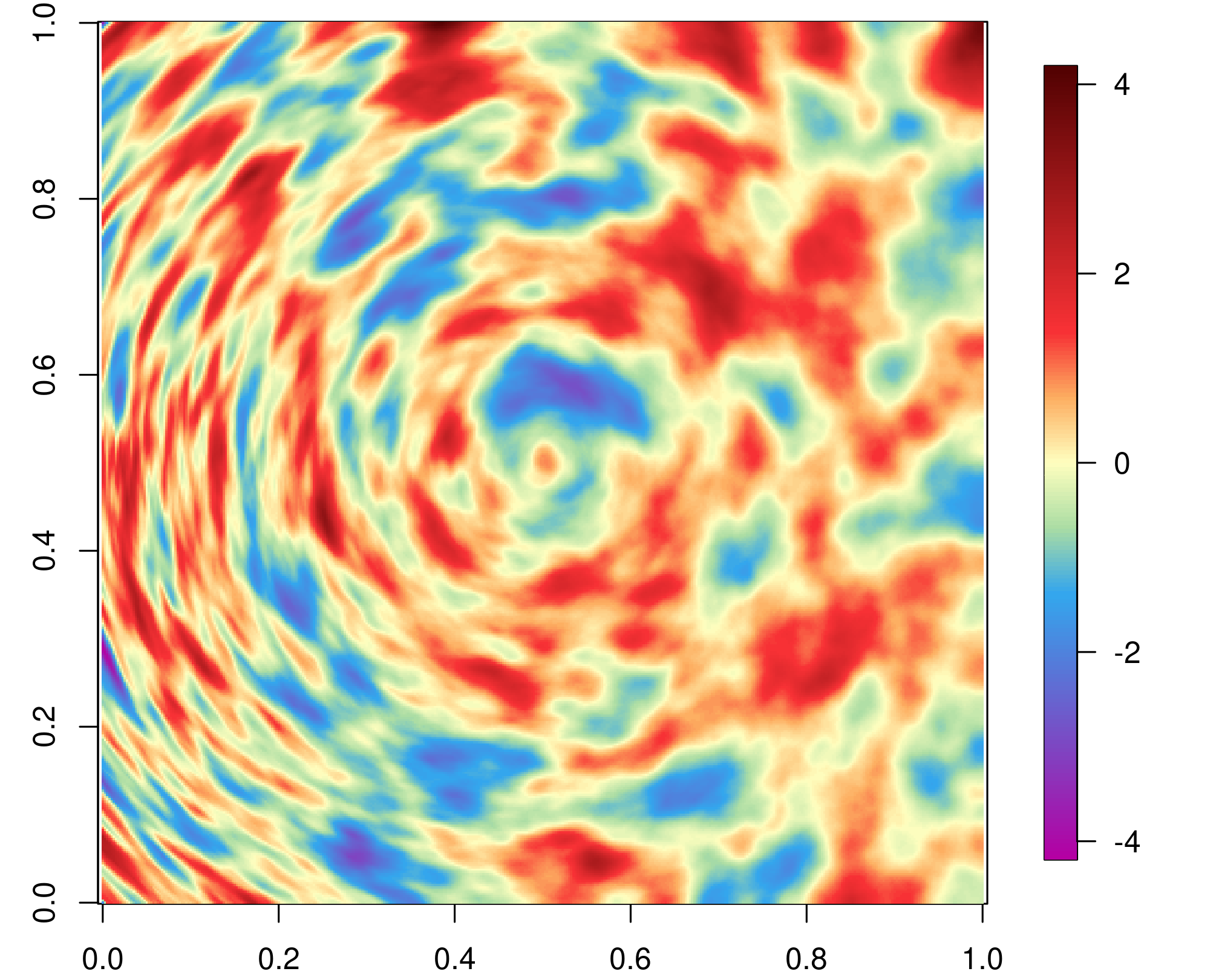}
	\caption{Example of anisotropy parameters (left) and corresponding random field simulation obtained using our method (right), on the unit square.\label{fig:nstat}}
\end{figure}
In conclusion, given a compact Euclidean domain $\mathcal{D}$ and a field of anisotropy parameters on $\mc{D}$, defining nonstationary random fields with the corresponding anisotropy properties can be done by applying the approach described in \Cref{sec:def_rf} to a tailored Riemannian manifold (namely $\mathcal{D}$ equipped with the metric~\eqref{eq:def_met_nstat}). Note that similar ideas could be applied to define random fields with varying covariance structure on more general surfaces if one can define coherent fields of anisotropy parameters on such surfaces.

\section{Prediction on Riemannian Manifolds}\label{sec:krig}

We now show how the construction presented in  \Cref{sec:rf_approx} provides efficient prediction algorithms in a quite general setting, which includes nonstationary covariances, non-Euclidean support and non-Matérn covariance functions.  Given some spatial domain $\mathcal{D}$, we assume that we observe some  real-valued variable $Y$ at $p\ge 1$ locations $\bm x_1, \dots, \bm x_p \in \mathcal{D}$. These observations are modeled as 
\begin{equation*}
	Y({\bm x}_i) = \mathcal{Z}({\bm x}_i) + \tau \epsilon_i, \quad 1 \le i \le p,
\end{equation*}
where $\epsilon_1, \dots, \epsilon_p$ are independent standard Gaussian variables, $\tau >0$, and $\mathcal{Z}$ denotes some GRF on $\mathcal{D}$ acting as a latent variable. Hence, they can be seen as observations of the latent field $\mathcal{Z}$ affected by some independent centered Gaussian noise with variance $\tau^2$.  

We aim at making the Best Linear Unbiased Prediction (BLUP) of the variable $\mathcal{Z}$ at $q$ locations $\bm x_{p+1}, \dots, \bm x_{p+q} \in \mathcal{D}$. Under a Gaussian assumption, recall that the BLUP is equal to the conditional expectation \citep{tong2012multivariate}, i.e. the optimal prediction in a $L^2$-sense. In the geostatistical literature, this prediction is referred to as kriging \citep{chiles1999geost}. In most geostatistical approaches, the GRF is either characterized by a covariance function or by a precision matrix. Here, in contrast, the GRF is defined on a triangulation of $\mc{D}$ and  it is characterized by a positive polynomial $P$ as per \Cref{eq:prec_mat_cov}. In this section, to derive the kriging algorithm, we first suppose that this polynomial is known. We will show in the next section how $\tau^2$ and the coefficients of $P$ can be estimated from a single realization of the vector of observations $\bm Y=(Y(\bm x_1), \dots, Y(\bm x_p))^T$.

Note that the approach we present here readily generalizes to the case where covariates are added to the model. In that case, the vector of observations $\bm Y$ should contain the residuals obtained after removing from the data points the trend defined by the covariates.

\subsection{A \q{Matrix Free} Kriging Algorithm}

We start with some triangulation of $\mathcal{D}$ with $n$ nodes $\bm s_1, \dots, \bm s_n \in \mathcal{D}$, and  $\mathcal{Z}$ is approximated by its finite element approximation $Z$ associated to this triangulation as shown in \Cref{sec:rf_approx}. Following the interpolation rule in~\Cref{eq:z_galer}, the values of the field $Z$ at the observed locations $\bm x_1, \dots, \bm x_p$ can be expressed  as a linear combination of the values taken at the triangulation nodes $\bm s_1, \dots, \bm s_n$. The vector of observations $\bm Y$ can thus be written as
\begin{equation}
	\bm Y = \bm M_\DD \bm Z + \tau \bm\epsilon,
	\label{eq:def_Y}
\end{equation}
where $\bm Z =(Z(\bm s_1), \dots, Z(\bm s_n))^T$, $\bm\epsilon = (\epsilon_1, \dots, \epsilon_p)^T$ and $\bm M_\DD\in\R^{p\times n}$ is the so-called design matrix containing the interpolation weights defined by
\begin{equation}
	[\bm M_\DD]_{ij}=\psi_j(\bm x_i), \quad 1 \le i \le p, \quad 1\le j\le n.
	\label{eq:def_md}
\end{equation}
The next proposition provides an analytic expression for the kriging predictors $Z^*(\bm x_{p+1})$ at some given target location $\bm x_{p+i}\in \mc{D}$ ($1\le i\le q$). It is proven in \Cref{appen:proof_krig}.

\begin{proposition}\label{prop:krig}
	The conditional distribution of $\bm Z$ given $\bm Y$ is that of a Gaussian vector with mean $\e[\bm Z \vert \bm Y]$ and covariance matrix $\cov[\bm Z\vert \bm Y]$ given by
	\begin{equation}
		\e[\bm Z \vert \bm Y]
		=\bm\Sigma \bm M_\DD^{T}(\bm M_\DD \bm\Sigma \bm M_\DD^T +\tau^2\bm I_p)^{-1}
		\bm Y
		=(\tau^2\bm Q + \bm M_\DD^T\bm M_\DD)^{-1}\bm M_\DD^T\bm Y,
		\label{eq:ec}
	\end{equation}
	and
	\begin{equation}
		\cov[\bm Z\vert \bm Y]
		=\bm \Sigma - \bm\Sigma \bm M_\DD^{T}(\bm M_\DD \bm\Sigma \bm M_\DD^T +\tau^2\bm I_p)^{-1}\bm M_\DD\bm \Sigma
		=\tau^2(\tau^2\bm Q + \bm M_\DD^T\bm M_\DD)^{-1},
		\label{eq:vc}
	\end{equation}
	where  $\bm\Sigma$ is the covariance matrix of $\bm Z$, $\bm Q=\bm \Sigma^{-1}$ is its precision matrix, and  $\tau^2$ and $\bm M_D$ are defined in~\Cref{eq:def_Y}. 
	Then, the vector of kriging predictors $\bm Z^*=(Z^*(\bm x_{p+1}), \dots, Z^*(\bm x_{p+q}))^T$ can be written as
	\begin{equation}
		\bm Z^*=\bm M_\TT \e[\bm Z \vert \bm Y],
		\label{eq:krig}
	\end{equation}
	where $\bm M_\TT$ is the target design matrix defined by
	\begin{equation}
		[\bm M_\TT]_{ij} = \psi_j(\bm x_{p+i}), \quad 1\le i\le q, \quad 1\le j\le n.
		\label{eq:def_mt}
	\end{equation}
\end{proposition}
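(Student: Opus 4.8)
The plan is to treat $(\bm Z, \bm Y)$ as a jointly Gaussian vector, apply the classical Gaussian conditioning formulas to obtain the covariance-based expressions in~\eqref{eq:ec} and~\eqref{eq:vc}, then convert these to the precision-based expressions via the Woodbury matrix identity, and finally deduce~\eqref{eq:krig} by linearity of the conditional expectation.

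First I would observe that, by~\eqref{eq:def_Y}, $\bm Y$ is a linear function of the Gaussian vector $(\bm Z, \bm\epsilon)$, so $(\bm Z, \bm Y)$ is jointly centered Gaussian. Using that $\bm Z$ and $\bm\epsilon$ are independent with $\bm\epsilon$ standard, its block covariance structure is $\cov[\bm Z] = \bm\Sigma$, $\cov[\bm Z, \bm Y] = \bm\Sigma \bm M_\DD^{T}$, and $\cov[\bm Y] = \bm M_\DD \bm\Sigma \bm M_\DD^{T} + \tau^2 \bm I_p$. Since $\tau^2 > 0$, this last matrix is positive definite and therefore invertible, so the standard formula for conditioning a Gaussian vector (see e.g.\ \citet{tong2012multivariate}) applies and gives that $\bm Z \mid \bm Y$ is Gaussian with mean $\bm\Sigma \bm M_\DD^{T}(\bm M_\DD \bm\Sigma \bm M_\DD^T +\tau^2\bm I_p)^{-1}\bm Y$ and covariance $\bm\Sigma - \bm\Sigma \bm M_\DD^{T}(\bm M_\DD \bm\Sigma \bm M_\DD^T +\tau^2\bm I_p)^{-1}\bm M_\DD\bm\Sigma$, i.e.\ the first expression in each of~\eqref{eq:ec} and~\eqref{eq:vc}.

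Next I would rewrite these in terms of $\bm Q = \bm\Sigma^{-1}$. For the covariance, the Woodbury identity applied with $A = \bm\Sigma^{-1}$, $U = \bm M_\DD^{T}$, $C = \tau^{-2}\bm I_p$, $V = \bm M_\DD$ yields $(\bm\Sigma^{-1} + \tau^{-2}\bm M_\DD^{T}\bm M_\DD)^{-1} = \bm\Sigma - \bm\Sigma \bm M_\DD^{T}(\bm M_\DD \bm\Sigma \bm M_\DD^T +\tau^2\bm I_p)^{-1}\bm M_\DD\bm\Sigma$; since $\bm\Sigma^{-1} + \tau^{-2}\bm M_\DD^{T}\bm M_\DD = \tau^{-2}(\tau^2\bm Q + \bm M_\DD^{T}\bm M_\DD)$, the right-hand side equals $\tau^2(\tau^2\bm Q + \bm M_\DD^{T}\bm M_\DD)^{-1}$, which is well defined because $\tau^2\bm Q + \bm M_\DD^{T}\bm M_\DD$ is positive definite (a positive definite matrix plus a positive semi-definite one). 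For the mean, it suffices to check the identity $\bm\Sigma \bm M_\DD^{T}(\bm M_\DD \bm\Sigma \bm M_\DD^T +\tau^2\bm I_p)^{-1} = (\tau^2\bm Q + \bm M_\DD^{T}\bm M_\DD)^{-1}\bm M_\DD^{T}$: multiplying on the left by $(\tau^2\bm Q + \bm M_\DD^{T}\bm M_\DD)$ and on the right by $(\bm M_\DD \bm\Sigma \bm M_\DD^T +\tau^2\bm I_p)$ and using $\bm Q\bm\Sigma = \bm I_n$, both sides collapse to $\tau^2\bm M_\DD^{T} + \bm M_\DD^{T}\bm M_\DD\bm\Sigma\bm M_\DD^{T}$, which proves~\eqref{eq:ec}.

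Finally, for the kriging predictors I would use the interpolation rule~\eqref{eq:z_galer}: $Z(\bm x_{p+i}) = \sum_{j=1}^{n} z_j \psi_j(\bm x_{p+i}) = [\bm M_\TT \bm Z]_i$ by the definition~\eqref{eq:def_mt} of $\bm M_\TT$, so $(Z(\bm x_{p+1}), \dots, Z(\bm x_{p+q}))^T = \bm M_\TT \bm Z$. Under the Gaussian assumption the BLUP of this vector given $\bm Y$ is its conditional expectation, and by linearity $\e[\bm M_\TT \bm Z \mid \bm Y] = \bm M_\TT\,\e[\bm Z \mid \bm Y]$, which is~\eqref{eq:krig}. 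I do not expect a genuine obstacle: the argument is essentially bookkeeping, and the only points needing care are verifying that the two matrices being inverted are invertible and performing the Woodbury manipulation without transpose or sign slips.
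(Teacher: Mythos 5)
Your proposal is correct and follows essentially the same route as the paper: form the jointly Gaussian vector $(\bm Z,\bm Y)$, apply the standard Gaussian conditioning formulas, convert to the precision-based expressions, and conclude by linearity of the conditional expectation. The only (immaterial) difference is that the paper obtains the second equalities in~\eqref{eq:ec} and~\eqref{eq:vc} from the block inverse of $\cov[(\bm Z,\bm Y)]$, whereas you use the Woodbury identity plus a direct verification for the mean; both are routine and equivalent.
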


We have assumed that the function $f$  characterizing the field $\mc{Z}$ is the inverse of $P$, a positive polynomial on $\R_+$. This implies in particular that the precision matrix $\bm Q$ in~\eqref{eq:ec} can be expressed as in \Cref{eq:prec_mat_cov}. Assuming that $P$ is known, the  kriging predictors $\bm Z^*$ in~\eqref{eq:krig}  can thus be computed using \Cref{alg:krig}.

\begin{algorithm}[htb]
	\caption{Kriging prediction.}\label{alg:krig}
	\begin{algorithmic}[1]
		\REQUIRE  Matrices $\sqrt{\bm C}$, $\bm S$, $\bm M_\DD$ and $\bm M_\TT$, as defined in \Cref{eq:def_masslump,eq:def_masslump_S,eq:def_md,eq:def_mt}.
		\REQUIRE Polynomial $\displaystyle P$.
		\REQUIRE Parameter $\tau>0$.
		\REQUIRE Vector of observations $\bm Y$. 
		\STATE Solve for $\bm X$ the linear system
		\begin{equation}
			\bm A\bm X =\bm M_\DD^T
			\bm Y
			\label{eq:linsyst}
		\end{equation}
		where 
		\begin{equation}
			\bm A=\tau^2\bm Q + \bm M_\DD^T\bm M_\DD=\tau^2(\sqrt{\bm C})P(\bm S)(\sqrt{\bm C})^{T} + \bm M_\DD^T\bm M_\DD
			\label{eq:def_A}
		\end{equation}
		\STATE Return $\bm Z^* := \bm M_{\TT}\bm X$.
	\end{algorithmic}
\end{algorithm}

Note that all the matrices $\sqrt{\bm C}$, $\bm S$, $\bm M_\DD$ and $\bm M_\TT$ are sparse, and that the matrix $\bm Q$ is also sparse when $P$ has a low degree. A classical approach for solving the linear system in~\Cref{alg:krig} consists in first building and storing the matrix $\bm A=\tau^2\bm Q + \bm M_\DD^T\bm M_\DD$, and then using a method for solving sparse linear systems. For instance, one could use so-called sparse direct solvers. Such solvers start by factorizing $\bm A$ into sparse triangular factors (using LU or Cholesky decompositions) and then solving the resulting triangular systems. The Cholesky decomposition for large $\bm A$ can be done exactly or with very accurate decomposition using Tile Low Rank approximations with the ExaGeoStat software  \citep{abdulah2018exageostat}. Note however, that in this case the matrix $\bm A$ needs to be built and stored.

When these direct approaches are not possible due to size of the problem, an alternative approach using iterative solvers must be used \citep{nocedal2006numerical}.  Such solvers only rely on products between the matrix $\bm A$ and vectors, and therefore can be used to approximately solve the linear system in~\Cref{alg:krig} without effectively requiring to build and store $\bm A$: only a routine performing the product between $\bm A$ and vectors is needed. Note that such products would only require products between the sparse matrices $\sqrt{\bm C}$, $\bm S$ and $\bm M_D$ and vectors (see \Cref{alg:prodA}). This approach yields a so-called \textit{\q{matrix-free}} approach to kriging in the sense that it does not require to explicitly build and store the precision matrix $\bm Q$ of the field.

\begin{algorithm}[htb]
	\caption{Product between a matrix $(\alpha \bm D P(\bm B)\bm D^{T} + \bm M^T\bm M)$ and a vector.}\label{alg:prodA}
	\begin{algorithmic}[1]
		\REQUIRE  Matrices $\bm B, \bm D \in \R^{n\times n}$ and $\bm M\in\R^{p\times n}$
		\REQUIRE Polynomial $\displaystyle P(X)=\sum_{k=0}^Kc_k X^k$ for some $K\in\mathbb{N}_0$, $c_0, \dots, c_K \in\R$
		\REQUIRE Parameter $\alpha\in\R$
		\REQUIRE Vector $\bm v\in\R^n$
		\STATE Compute	$\bm x := \bm M^T \bm M \bm v=\big((\bm M \bm v)^T\bm M\big)^T$
		\STATE 	Compute $\bm w:= \bm D^T\bm v=(\bm v^T\bm D)^T$
		\STATE Compute $\bm y :=c_K \bm w$
		\FOR{$k = K-1,\dots,0$}
		\STATE Compute $\bm y \leftarrow c_{k}\bm w+\bm S \bm y$
		\ENDFOR	
		\STATE Return $\alpha\bm D\bm y + \bm x$
	\end{algorithmic}
\end{algorithm}

\subsection{Conditional Simulations}
\label{sec:sim}

Generating samples from the conditional distribution of $\mathcal{Z}$ (or rather of its approximation $Z$) given $\bm Y$ is now a straightforward task. Since the distribution of $Z$ is entirely specified through the distributions of its weights $\bm Z$,  this amounts to sample from the conditional distribution of $\bm Z$ given $\bm Y$, which will be denoted  $\pi_{\bm Z\vert \bm Y}$.
Recall from~\Cref{prop:krig} that $\pi_{\bm Z\vert \bm Y}$ is a multivariate Gaussian distribution with mean $\e[\bm Z \vert \bm Y]$ and covariance matrix $\cov[\bm Z\vert \bm Y]$ given by \Cref{eq:ec,eq:vc}.
Hence, sampling from the conditional distribution $\pi_{\bm Z\vert \bm Y}$ is straightforward, as shown in in~\Cref{alg:simc}. 

\begin{algorithm}
	\caption{Conditional simulation of the vector $\bm Z$ given the observations $\bm Y$.}\label{alg:simc}
	\begin{algorithmic}[1]
		\REQUIRE  Covariance matrix $\cov[\bm Z\vert \bm Y]$ defined in~\Cref{eq:vc}.
		\REQUIRE Conditional mean $\e[\bm Z \vert \bm Y]$ defined in~\Cref{eq:ec}. 
		\STATE Sample a centered Gaussian vector $\bm X$ with covariance matrix $\cov[\bm Z\vert \bm Y]$.
		\STATE $\bm X \leftarrow \bm X + \e[\bm Z \vert \bm Y]$.
		\STATE Return $\bm X$.
	\end{algorithmic}
\end{algorithm}
The conditional mean $\e[\bm Z \vert \bm Y]$ required for~\Cref{alg:simc} is computed using the same methods as those described in~\Cref{sec:krig} to compute the kriging predictors.

Since an explicit formula is available for $\cov[\bm Z\vert \bm Y]$ in \Cref{eq:vc}, sampling the Gaussian vector of the first step in \Cref{alg:simc} could be directly done by finding a factor $\bm L$ such that $\cov[\bm Z\vert \bm Y]=\bm L\bm L^T$ and then returning the product $\bm L \bm W$ where $\bm W$ is a vector of independent standard Gaussian variables. Possibles candidates for $\bm L$ include the Cholesky decomposition of $\cov[\bm Z\vert \bm Y]$, but also the matrix function $h(\cov[\bm Z\vert \bm Y])$ where $h$ is the square-root function, and the matrix function $\tilde{h}(\tau^2\bm Q + \bm M_\DD^T\bm M_\DD)$ where $\tilde{h}$ is the inverse square-root function. In practice, and as before, both matrix functions could be approximated by matrix polynomials and the \q{matrix-free} algorithms presented above could be used.

Another method to sample the Gaussian vector of the first step in \Cref{alg:simc} stems from the fact that the vector  $\bm Z - \e[\bm Z \vert \bm Y]$ is such a vector, and that the expression of $\cov[\bm Z\vert \bm Y]$ does not explicitly depend on the vectors $\bm Z$ and $\bm Y$. Hence, by sampling new vectors $\bm Z'$ and $\bm Y'$ with the same distribution as $\bm Z$ and $\bm Y$, and by computing $\bm Z' - \e[\bm Z' \vert \bm Y']$ we retrieve a centered Gaussian vector with covariance matrix $\cov[\bm Z'\vert \bm Y']=\cov[\bm Z\vert \bm Y]$. This method is presented in \Cref{alg:sim_cov_cond}. It relies on being able to sample the vector $\bm Z'$ described above, which can be done by either:
\begin{itemize}
	\item Computing the product $\bm B \bm W$ where $\bm B$ is a square-root of $\bm\Sigma$ (e.g. the Cholesky factor of $\bm \Sigma$ or the matrix $(\sqrt{\bm C})^{-T}(1/\sqrt{P})(\bm S)$).
	\item Solving for $\bm X$ the linear system  $\tilde{\bm B}\bm X= \bm W$, where  $\tilde{\bm B}$ is a square-root of $\bm Q$ (e.g. the Cholesky factor of $\bm Q$ or the matrix $(\sqrt{\bm C}){\sqrt{P}}(\bm S)$).
\end{itemize}
In practice, the matrix functions appearing above are once again polynomially approximated.
More details on the step 1 of Algorithm \ref{alg:sim_cov_cond} can be found in \cite{pereira2019efficient}.

\begin{algorithm}
	\caption{Sampling a centered Gaussian vectors with covariance matrix $\cov[\bm Z\vert \bm Y]$.}\label{alg:sim_cov_cond}
	\begin{algorithmic}[1]
		\REQUIRE  Covariance matrix $\bm \Sigma$ in \Cref{eq:cov_mat_g} or precision matrix in \Cref{eq:prec_mat_cov}.
		\REQUIRE Matrix $\bm M_\DD$ and parameter $\tau$ defining the observations $\bm Y$ in \Cref{eq:def_Y}. 
		
		\STATE Sample a centered Gaussian vector $\bm Z'$ with covariance matrix $\bm \Sigma$ or precision matrix $\bm Q$.
		
		\STATE Sample a vector $\bm \epsilon'$ with independent standard Gaussian entries.
		
		\STATE Compute $\bm Y' :=\bm M_\DD \bm Z' + \tau \epsilon'$.
		
		\STATE  Compute $\e[\bm Z' \vert \bm Y']$ by replacing $\bm Y$ by $\bm Y'$ in \Cref{eq:ec}.
		
		\STATE  Return  $\bm Z' - \e[\bm Z' \vert \bm Y']$.
	\end{algorithmic}
\end{algorithm}

\section{Estimation of the Parameters}
\label{sec:estim}

In order to apply the kriging and simulation procedures presented above, one needs to know the polynomial $P$  characterizing the field $\mathcal{Z}$, as well as the parameter $\tau^2$ defining the variance of the Gaussian noise. Usually, these parameters are not known, and they must be estimated from the observations $\bm Y$. We now show how a maximum likelihood approach can be efficiently implemented. Let us denote by $\bm\theta$  the vector of parameters containing the coefficients of $P$ and the variance $\tau^2$. Note then that, following \Cref{eq:def_Y}, the vector $\bm Y$ is  a centered Gaussian vector with covariance matrix
\begin{equation}
	\bm\Sigma_{\bm Y}(\bm\theta)=\bm M_D \bm Q(\bm\theta)^{-1}\bm M_D^T + \tau^2 \bm I_p,
\end{equation}
where $\bm Q(\bm\theta)=(\sqrt{\bm C})P(\bm S)(\sqrt{\bm C})^{T}$.
The log-likelihood of $\bm Y$ is thus 
\begin{equation*}
	\mc{L}(\bm\theta)= -\frac{1}{2}\big(p\log 2\pi -\log \vert\bm Q_{\bm Y}(\bm\theta)\vert + \bm Y^T\bm Q_{\bm Y}(\bm\theta)\bm Y \big),
\end{equation*}
where $\bm Q_{\bm Y}(\bm\theta)=\bm\Sigma_{\bm Y}(\bm\theta)^{-1}$. Considering the matrix  $\bm A=\bm A(\bm\theta)$ defined in~\eqref{eq:def_A}, we can write 
\begin{equation}
	\bm Y^T\bm Q_{\bm Y}(\bm\theta)\bm Y= \tau^{-2}\big(\bm Y^T\bm Y-\bm Y^T\bm M_D\bm A(\bm\theta)^{-1}\bm M_D^T\bm Y\big),
	\label{eq:quad_form}
\end{equation}
and
\begin{equation}
	\begin{aligned}
		\log \vert\bm Q_{\bm Y}(\bm\theta)\vert
		&= \log \vert\bm Q(\bm\theta)\vert+(n-p)\log \tau^2-\log\vert \bm A(\bm\theta)\vert \\
		&=\log \vert P(\bm S) \vert +2\log\vert\sqrt{\bm C}\vert+(n-p)\log \tau^2-\log\vert \bm A(\bm \theta) \vert.
	\end{aligned}
	\label{eq:logdet}
\end{equation}
The precise computation of the log-likelihood deserves some comments. The quadratic form~\eqref{eq:quad_form} is computed  using the methods  introduced to solve the linear system, as shown in \Cref{alg:krig} and in \Cref{eq:linsyst}. Evaluating the log-determinants in~\Cref{eq:logdet} could be done using a Cholesky (or LU) factorization of the matrices $\bm Q(\bm \theta)$ and $\bm A(\bm\theta)$ and then summing the log of the diagonal elements of the resulting triangular factors. However, one can also use a \q{matrix-free} approach to compute an approximation of these log-determinants. This approach, detailed in the rest of this section, is a generalization to matrix functions of the results in~\citet{han2015large}. It is based on the following result (proven in \Cref{appen:proof_logdet}).

\begin{proposition}
	\label{prop:logdet}
	Let $\bm B$ be a diagonalizable matrix and $h : \R \rightarrow (0, \infty)$. The log-determinant of the matrix function $h(\bm B)$ satisfies the relation
	\begin{equation*}
		\log \vert h(\bm B) \vert = \trace(\log h(\bm B))=\e[\bm W ^T \log h(\bm B) \bm W],
	\end{equation*}
	where $\bm W$ is a vector whose entries are independent zero-mean unit-variance random variables, and $\log h(\bm B)$ is the matrix function defined from the function $\log h$.
\end{proposition}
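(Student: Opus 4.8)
The plan is to split the two claimed equalities: the first, $\log\vert h(\bm B)\vert = \trace(\log h(\bm B))$, is a deterministic identity about matrix functions of a diagonalizable matrix, and the second, $\trace(\log h(\bm B)) = \e[\tr{\bm W}\log h(\bm B)\bm W]$, is the standard stochastic trace-estimation (Hutchinson) identity.

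For the first equality I would start from an eigendecomposition $\bm B = \bm V\Diag(\lambda_1,\dots,\lambda_n)\bm V^{-1}$, which exists by the diagonalizability hypothesis. By definition of a matrix function, $h(\bm B) = \bm V\Diag\big(h(\lambda_1),\dots,h(\lambda_n)\big)\bm V^{-1}$, so the eigenvalues of $h(\bm B)$ are $h(\lambda_1),\dots,h(\lambda_n)$, all strictly positive because $h$ takes values in $(0,\infty)$. The determinant being the product of the eigenvalues, this already gives $\log\vert h(\bm B)\vert = \sum_{k=1}^n \log h(\lambda_k)$. On the other hand, the matrix function associated with the map $\lambda\mapsto \log h(\lambda)$ — well defined on the spectrum of $\bm B$ precisely because each $h(\lambda_k)>0$ — equals $\bm V\Diag\big(\log h(\lambda_1),\dots,\log h(\lambda_n)\big)\bm V^{-1}$, whose trace is $\sum_{k=1}^n \log h(\lambda_k)$ by similarity-invariance of the trace. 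Comparing the two expressions closes the first equality; along the way one also checks the composition rule $\log\big(h(\bm B)\big) = (\log\circ h)(\bm B)$, which is immediate since both sides share the eigendecomposition above, so the two readings of the notation $\log h(\bm B)$ coincide.

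For the second equality, set $\bm A := \log h(\bm B)$, a fixed (deterministic) $n\times n$ matrix. Expanding the quadratic form entrywise gives $\tr{\bm W}\bm A\bm W = \sum_{i,j}[\bm A]_{ij}W_iW_j$; taking expectations and using that the $W_i$ are independent with zero mean and unit variance, so $\e[W_iW_j]=\delta_{ij}$, collapses the double sum to $\sum_i[\bm A]_{ii} = \trace(\bm A)$. This step uses nothing about $\bm A$ beyond being a deterministic square matrix, so it applies verbatim here.

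I do not expect a genuine obstacle: the only points demanding a little care are (i) verifying that $\log h(\bm B)$ is a legitimately defined matrix function, which hinges entirely on $h(\lambda_k)>0$ for every eigenvalue $\lambda_k$ of $\bm B$ and hence follows directly from the stated range of $h$, and (ii) making the composition identity for matrix functions explicit rather than implicit. I would also note in passing that diagonalizability is what keeps every manipulation elementary; the same identity holds for general $\bm B$ via the holomorphic functional calculus, but that generality is not needed for the log-determinant computations in \Cref{eq:logdet}.
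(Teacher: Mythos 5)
Your proof is correct and follows essentially the same route as the paper: the first equality via the eigendecomposition and the definition of matrix functions, and the second via the Hutchinson trace identity (which you prove directly from $\e[W_iW_j]=\delta_{ij}$ where the paper simply cites Hutchinson). Your added remarks on the well-definedness of $\log h(\bm B)$ and the composition rule are careful but do not change the argument.
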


The matrix function $\log h(\bm B)$ can in practice be approximated by a matrix polynomial $P_{\log h}(\bm B)$ where $P_{\log h}$ denotes a polynomial approximation of $\log h$ over an interval containing the eigenvalues of $\bm B$ (and defined using for instance Chebyshev polynomial approximation). Then, we can approximate $\log \vert h(\bm B)\vert$ as
\begin{equation}
	\log \vert h(\bm B)\vert \approx \frac{1}{M}\sum_{m=1}^M \bm W_m ^T P_{\log h}(\bm B) \bm W_m,
	\label{eq:approx_logdet}
\end{equation}
where $\bm W_1, \dots, \bm W_M$ denote $M$ independent samples of $\bm W$ (defined for instance from a Gaussian or Rademacher distribution).  Similarly to \Cref{alg:prodA}, each quadratic form in \Cref{eq:approx_logdet} is computed in an iterative way while only requiring products between the matrix $\bm B$ and vectors. This approach can  be used to compute the log-determinant~\eqref{eq:logdet} by noting that both $\log \vert P(\bm S)\vert$ and $\log\vert \bm A(\bm\theta)\vert$ can be written in the form $\log h(\bm B)$ with $\bm B=\bm S$ and $h=P$ for the former, and $\bm B=\bm A(\bm\theta)$ and $h$ to be the identity map for the latter. The whole procedure to compute  $\log \vert\bm Q_{\bm Y}(\bm\theta)\vert$ is summarized in \Cref{alg:logdet}. 
Note in particular that this algorithms requires to know the intervals containing the eigenvalues of $\bm S$ and those of $\bm A(\bm \theta)$. Details on the computation of these intervals can be found in \Cref{appen:interval}. A discussion about the \q{matrix-free} approach to the computation of $\log | h(\bm B)|$ is deferred to \Cref{sec:disc}.

\begin{algorithm}
	\caption{Computation of  $\log \vert\bm Q_{\bm Y}(\bm\theta)\vert$ defined in \Cref{eq:logdet}}
	\label{alg:logdet}
	\begin{algorithmic}[1]
		\REQUIRE  Matrices $\sqrt{\bm C}, \bm S \in\R^{n\times n}$ and $\bm M_\DD \in\R^{p\times n}$  as defined in \Cref{eq:def_masslump,eq:def_masslump_S,eq:def_md}
		\REQUIRE Vector $\bm\theta$ containing the coefficients of $P$ and the variance parameter $\tau^2$
		\REQUIRE Number of samples $M$
		\STATE Compute the coefficients of a polynomial approximation $P_{\log P}$ of the function $\lambda \mapsto \log P(\lambda)$ over an interval containing the eigenvalues of $\bm S$
		\STATE Compute the coefficients of a polynomial approximation $P_{\log}$ of the function $\lambda \mapsto \log \lambda$ over an interval containing the eigenvalues of the matrix $\bm A(\theta)$ defined in \Cref{eq:def_A}
		\STATE Set $Q_1=Q_2:=0$
		\FOR{m=1,\dots,M}
		\STATE Sample a vector $\bm W$ with  independent identically distributed entries with mean $0$ and variance $1$ 
		\STATE Compute $\bm u:= P_{\log P}(\bm S) \bm W$ using \Cref{alg:prodA} 
		\STATE Set $Q_1 \leftarrow Q_1 + \bm W^T \bm u$ 
		\STATE Compute $\bm v:= P_{\log}(\bm A (\bm \theta)) \bm W$ using \Cref{alg:prodA} 
		\STATE Set $Q_2 \leftarrow Q_2 + \bm W^T \bm v$ 
		\ENDFOR
		\STATE Compute $L = (Q_1-Q_2)/M + (n-p) \log \tau^2 + 2\sum_{i=1}^n \log [\sqrt{\bm C}]_{ii}$ 
		\RETURN $L$
	\end{algorithmic}
\end{algorithm}

Since we can now evaluate the log-likelihood for any vector of parameters, we can plug \Cref{alg:logdet} into any optimization algorithm that only requires evaluations of an objective function to maximize it. Examples of such algorithms include the Nelder-Mead algorithm, and any gradient-descent algorithm for which the gradients would be numerically approximated by finite differences \citep{nocedal2006numerical}. 

Finally, note that the procedure presented in this section naturally extends to the case where covariates are added to the model. Indeed, assuming now that the observations are modeled as $\tilde{\bm Y}={\bm X}\bm \beta + \bm Y$ where $\bm Y$ is defined as before, $\bm X$ is a matrix of covariates and $\bm \beta$ a vector containing the associated regression coefficients. Then, the maximum likelihood estimate of $\bm\beta$ is given by $\widehat{\bm \beta}(\bm\theta)=(\bm X^T \bm Q_{\bm Y}(\bm\theta)\bm X)^{-1}\bm X\bm Q_{\bm Y}(\bm\theta) \bm \tilde{\bm Y}$, and the estimation of the parameter can be carried out by maximizing the likelihood $\mc{L}$, where $\bm Y$ is now replaced by $\tilde{\bm Y}-\bm X\widehat{\bm\beta}(\bm\theta)$.

\section{Illustration}
\label{sec:illustration}
The proposed algorithms are illustrated on synthetic very large data sets. Our aim is to show that our approach compares very well with the GMRF approximation in \citet{lindgren2011explicit} on very large grids, even in cases that are favorable to the GMRF approximation.

\subsection{Simulation and Kriging}

We first consider the case of 3D standardized GRF with varying anisotropies in the horizontal plane and an exponential covariance, which corresponds in 3D to the choice $P(\lambda)=(\kappa^2 +\lambda)^2$. Following the remarks of \Cref{sec:sim}, it is sampled by using a Chebyshev polynomial approximation of $\lambda\mapsto 1/\sqrt{P}(\lambda)=(\kappa^2+\lambda)^{-1}$ of degree 268 \citep[see e.g.][for details]{pereira2019efficient}. We chose a mesh built from 6 tetrahedrons in each cell of a regular grid with lags suitably chosen to fit with the ranges of the GRF. The resulting size of the discretized vector $\bm Z$ is about $1.5\times 10^7$. On this extremely large grid, the simulation takes only 30 seconds on a laptop running at 1.9 Ghz on 8 cores. Most of the time is spent for the matrix-vector products implied by the polynomial approximation as shown in \Cref{alg:prodA}.

A sub-sampling of this simulation is done to obtain $10^5$ randomly located observations which are used to perform kriging. Since interpolation by kriging is smoother than a simulation, a coarser mesh can be used. The size of the resulting kriging system is about $7 \times 10^6$.  A measurement error with a variance $\tau^2=0.01$ is added to the model. Conjugate gradient without preconditioning is used to solve the system of \Cref{eq:linsyst} in  \Cref{alg:krig}. The algorithm converges in 1098 iterations for a computing time around 400 seconds. Results are displayed in \Cref{fig:3dblock}.
Note that in most applications, the variance $\tau^2$ is greater than $1\%$ of the variance. In these cases, the system of \Cref{eq:linsyst} is better conditioned, leading to a faster convergence of the conjugate gradient.
Kriging is done here in a nonstationary context, with a stationary mesh which is thus far from optimality in most locations. If the simulation were stationary, the mesh could be tailored to the specific model at hand. The grid could be oriented according to the anisotropy tensor and the lag in each direction could be chosen according to the associated directional range of the model. As a result, a given accuracy would be obtained with a lower degree of the Chebyshev polynomial and system \Cref{eq:linsyst} would be better conditioned, thus leading to faster computations. In our experience, stationary simulations and kriging are usually 10 times faster than nonstationary ones, all other settings being equal.

\begin{figure}[thb]
	\centering
	\includegraphics[width=0.4\textwidth]{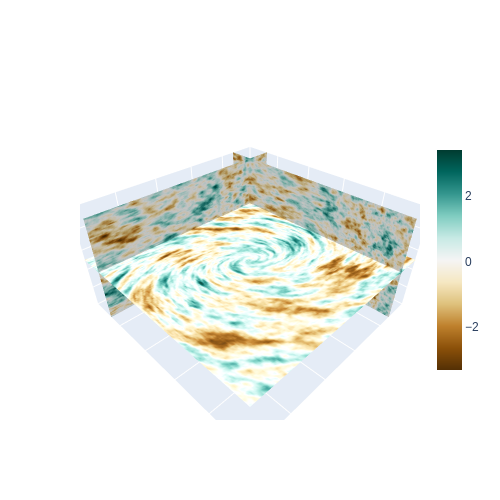}
	\includegraphics[width=0.4\textwidth]{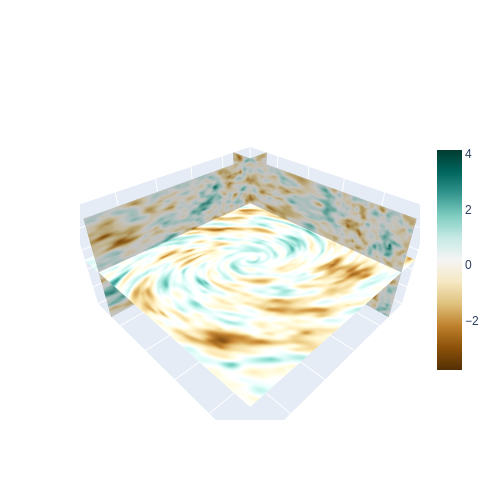}
	\caption{Left:  3D  simulation of a GRF with varying anisotropies. Right: Kriging estimate using $10^5$ randomly located samples from the simulation on the left.}\label{fig:3dblock}
\end{figure}

\subsection{Estimation of the Parameters of a GMRF}
\label{sec:mle}

In this example, the coefficients of a polynomial $P$ characterizing an isotropic GMRF and the measurement error $\tau^2$ are estimated by the approach described in \Cref{sec:estim}. The coefficients of the true $P$ are given in \Cref{tab:table_param}. It does not correspond to a Matérn type. $1.5 \times 10^4$ synthetic observations are sampled from the model at random locations of a square of size $40$ and a Gaussian noise with variance $\tau^2=0.01$ is added (see \Cref{fig:simu2d}).

\begin{figure}[thb]
	\centering
	\includegraphics[width=0.335\textwidth]{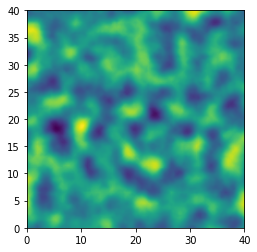}
	\includegraphics[width=0.495\textwidth]{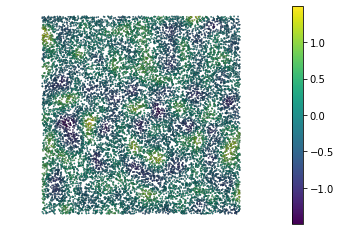}
	\caption{Left:  Realization of the GMRF on the square. Right: Sampling of $1.5 \times 10^4$ noisy observations from the simulation on the left.}\label{fig:simu2d}
\end{figure}

To ensure that $P$ takes strictly positive values over $\R_+$ during the estimation, the problem is re-parameterized by using two arbitrary polynomials $P_1$ and $P_2$ as follows:

\begin{equation}
	P(x) =  P_1^2(x) + x P_2^2(x).
	\label{eq:param_positive_poly}
\end{equation}

Indeed, all the positive polynomials over $\R_+$ can be written according to \Cref{eq:param_positive_poly}. 
A fixed positive value $\varepsilon$ is added to the right-hand side of \Cref{eq:param_positive_poly} to ensure strict positivity of $P$.
In this study, $\varepsilon$ is set to $10^{-3}$.
The components of $\bm \theta$ are the coefficients of $P_1$ and $P_2$. The likelihood is maximized with the COBYLA algorithm \citep{powell1994direct}, using several initial guesses and selecting the best output. To compute the log-determinant by the Hutchinson estimators \citep{hutchinson1989stochastic}, the same Gaussian vectors are used for all the iterations. First only one vector is used in order to find an acceptable solution. Then the algorithm is relaunched and 10 random vectors are used until convergence.

The true coefficients of $P$ (resulting from those of $P_1$ and $P_2$) are given in \Cref{tab:table_param} along with the estimated coefficients. We also give in this table the initial guess that yielded the estimated coefficients. The initial value for $\tau^2$ was set to $1$ and its estimation is $0.00944$.
At first glance, the estimation results does not seem very accurate.
However, the associated covariance functions, computed by Fast-Fourier-Transform (FFT) and displayed on \Cref{fig:cova_result} lead to a more positive conclusion. Indeed, the estimated covariance is close to the true one, especially near the origin. Nevertheless, the likelihood seems to have several modes and the results are sensitive to the choice of the initial values. $\tau^2$ is generally well estimated but the estimated covariance of the underlying GMRF is not always so close to the true one. Despite this optimization problem, further discussed in the next section, the matrix-free approach allows to approximate the likelihood in order to estimate the shape of the covariance, even with a moderate number of random vectors for the Hutchinson estimator.

\begin{table}
	\caption{Coefficients of the polynomials (true, estimated, and initial).}    \label{tab:table_param}
	\centering
	\begin{tabular}{rcccc}
		\hline \hline
		Degree & 0 & 1 & 2 & 3 \\
		\hline
		True & 1 & -0.75 & -0.75 & 1\\
		Estimated & 0.55 & 2.42 &-5.17 &2.60 \\
		Initial & 0.001 & 0 & 0 & 0 \\
		\hline \hline
	\end{tabular}
\end{table}

\begin{figure}[thb]
	\centering
	\includegraphics[width=0.49\textwidth]{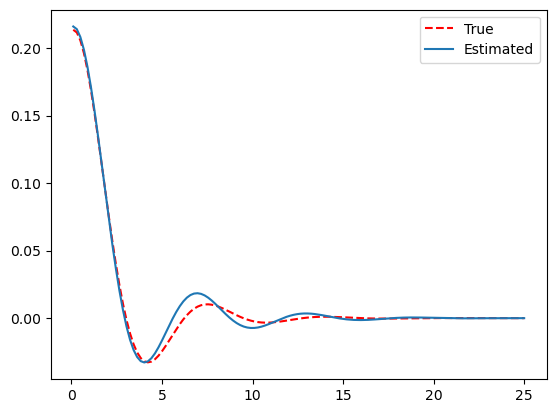}
	\caption{Covariances obtained from $P$ by FFT (true model and estimated).}\label{fig:cova_result}
\end{figure}
\section{Discussion and Conclusion}
\label{sec:disc}

We have proposed a generic approach for defining GRFs on compact Riemannian manifolds. It combines two ingredients of modern geostatistics, as pioneered in \citet{lindgren2011explicit}: the definition of GRFs through the expansions in the eigenfunctions of the Laplace–Beltrami operator on the Riemannian manifold and the finite element approximation of these GRFs. This approach is quite general. Not limited to spheres, it can be applied to construct valid GRFs on any smooth compact manifold -- and in particular to any compact surface. Moreover, since the GRF is characterized by its spectral density, it is not limited to Whittle-Matérn random fields.  The proposed Riemannian metric offers a straightforward interpretation of local anisotropies. Our approach is thus also perfectly suited to the analysis of data in Euclidean domains with nonstationary range and nonstationary anisotropies.

For this quite general class of GRFs, we have provided efficient algorithms that do not require to build and store possibly very large matrices. Instead, our \q{matrix-free} approach is grounded on algorithms only requiring efficient routines for computing the product between very sparse matrices and vectors, as shown in \Cref{alg:prodA}. This ensures the scalability of this method, thus paving the way to efficient nonstationary geostatistics for large datasets. We have shown on synthetic examples that our approach is able to handle grids with millions of nodes (up to $10^7$) in only few minutes. 

The main bottleneck is the computation of the log-determinant of the matrix function $h(\bm B)$ in \Cref{prop:logdet}. When possible, the Cholesky decomposition of $\bm B$ is best. This is the case for matrices whose size is in the range of $10^4$ or smaller. The \q{matrix-free} approach described above scales very well with the size of $\bm B$. It has been successfully applied in the context of seismic filtering \citep{pereiraPhd2019,pereira2020matrix} and it is now part of industrial codes able to filter very large noisy seismic datasets (in the range $10^6$ to $10^7$ in few minutes). Even though being scalable, our algorithms could be accelerated in some situations. \Cref{alg:logdet} requires a Monte-Carlo loop relating to the Hutchinson estimator \Cref{eq:approx_logdet} and iterated products as described in \Cref{alg:prodA} which can be long if the degree of the polynomial is high. Further research is needed in order to better assess the precision of the approximation \eqref{eq:approx_logdet} in view of optimizing the computation of the log-determinant. 

Besides, the inference of parameters through likelihood maximization poses some challenges due to the existence in general of local maxima \citep{williams2006gaussian}. In our case, we performed the minimization with several initial guesses and chose the one yielding the best result. In order to robustify the estimation, one could move to algorithms better equipped to handle local maxima, e.g. particle swarm optimizers and other evolutionary optimizers.  Inspired by the recent advances in Machine Learning and Deep Learning, another line of research is how to maximize the log-likelihood using (stochastic) gradient algorithms \citep{simon2013evolutionary}.

We believe that the \q{matrix-free } approach is thus a very promising tool for the analysis of environmental dataset that will be tested in future work, including in a spatio-temporal setting. Other directions for future research include 3D extensions and the modeling of nonstationary anisotropies on spheres and on manifolds in general.

\section*{Supplementary Material}

The code used to perform the maximum likelihood estimation in \Cref{sec:mle} is available at \url{https://github.com/mike-pereira/matrix-free-mle}.

\section*{Acknowledgements} The authors acknowledge the support of the Mines Paris / INRAE chair \q{Geolearning}.

\section*{Competing Interests}
The authors have no competing interests to declare.

\bibliographystyle{jds}       
\bibliography{bib}

\begin{thebibliography}{41}
\providecommand{\natexlab}[1]{#1}

\bibitem[{Abdulah et~al.(2018)Abdulah, Ltaief, Sun, Genton, and
  Keyes}]{abdulah2018exageostat}
Abdulah S, Ltaief H, Sun Y, Genton MG, Keyes DE (2018).
\newblock Exa{G}eo{S}tat: A high performance unified software for geostatistics
  on manycore systems.
\newblock \emph{IEEE Transactions on Parallel and Distributed Systems}, 29(12):
  2771--2784.

\bibitem[{Borovitskiy et~al.(2021)Borovitskiy, Azangulov, Terenin, Mostowsky,
  Deisenroth, and Durrande}]{pmlr-v130-borovitskiy21a}
Borovitskiy V, Azangulov I, Terenin A, Mostowsky P, Deisenroth M, Durrande N
  (2021).
\newblock Mat{\'e}rn {G}aussian processes on graphs.
\newblock In: \emph{International Conference on Artificial Intelligence and
  Statistics}, 2593--2601. PMLR.

\bibitem[{Borovitskiy et~al.(2020)Borovitskiy, Terenin, Mostowsky
  et~al.}]{borovitskiy2020matern}
Borovitskiy V, Terenin A, Mostowsky P, et~al. (2020).
\newblock Mat{\'e}rn {G}aussian processes on {R}iemannian manifolds.
\newblock \emph{Advances in Neural Information Processing Systems}, 33:
  12426--12437.

\bibitem[{Chil{\`e}s and Delfiner(2012)}]{chiles1999geost}
Chil{\`e}s JP, Delfiner P (2012).
\newblock \emph{Geostatistics : Modeling Spatial Uncertainty. 2nd Edition}.
\newblock Wiley Series In Probability and Statistics.

\bibitem[{Emery and Porcu(2019)}]{Emery2019simulating}
Emery X, Porcu E (2019).
\newblock Simulating isotropic vector-valued {G}aussian random fields on the
  sphere through finite harmonics approximations.
\newblock \emph{Stochastic Environmental Research and Risk Assessment}, 33(8):
  1659--1667.

\bibitem[{Fouedjio et~al.(2016)Fouedjio, Desassis, and
  Rivoirard}]{fouedjio2016generalized}
Fouedjio F, Desassis N, Rivoirard J (2016).
\newblock A generalized convolution model and estimation for non-stationary
  random functions.
\newblock \emph{Spatial Statistics}, 16: 35--52.

\bibitem[{Fouedjio et~al.(2015)Fouedjio, Desassis, and
  Romary}]{fouedjio2015estimation}
Fouedjio F, Desassis N, Romary T (2015).
\newblock Estimation of space deformation model for non-stationary random
  functions.
\newblock \emph{Spatial Statistics}, 13: 45--61.

\bibitem[{Fuglstad et~al.(2015{\natexlab{a}})Fuglstad, Lindgren, Simpson, and
  Rue}]{fuglstad2015exploring}
Fuglstad GA, Lindgren F, Simpson D, Rue H (2015{\natexlab{a}}).
\newblock Exploring a new class of non-stationary spatial {G}aussian random
  fields with varying local anisotropy.
\newblock \emph{Statistica Sinica}, 25: 115--133.

\bibitem[{Fuglstad et~al.(2015{\natexlab{b}})Fuglstad, Simpson, Lindgren, and
  Rue}]{fuglstad2015does}
Fuglstad GA, Simpson D, Lindgren F, Rue H (2015{\natexlab{b}}).
\newblock Does non-stationary spatial data always require non-stationary random
  fields?
\newblock \emph{Spatial Statistics}, 14: 505--531.

\bibitem[{Gershgorin(1931)}]{gershgorin1931uber}
Gershgorin S (1931).
\newblock {\"U}ber die {A}bgrenzung der {E}igenwerte einer matrix.
\newblock \emph{Izv. Akad. Nauk. USSR Otd. Fiz.-Mat. Nauk}, 7: 749--754.

\bibitem[{Gneiting(2013)}]{gneiting2013strictly}
Gneiting T (2013).
\newblock Strictly and non-strictly positive definite functions on spheres.
\newblock \emph{Bernoulli}, 19(4): 1327--1349.

\bibitem[{Han et~al.(2015)Han, Malioutov, and Shin}]{han2015large}
Han I, Malioutov D, Shin J (2015).
\newblock Large-scale log-determinant computation through stochastic
  {C}hebyshev expansions.
\newblock In: \emph{International Conference on Machine Learning}, 908--917.
  PMLR.

\bibitem[{Heaton et~al.(2019)Heaton, Datta, Finley, Furrer, Guinness,
  Guhaniyogi et~al.}]{heaton2019case}
Heaton MJ, Datta A, Finley AO, Furrer R, Guinness J, Guhaniyogi R, et~al.
  (2019).
\newblock A case study competition among methods for analyzing large spatial
  data.
\newblock \emph{Journal of Agricultural, Biological and Environmental
  Statistics}, 24(3): 398--425.

\bibitem[{Higdon et~al.(1999)Higdon, Swall, and Kern}]{higdon1999non}
Higdon D, Swall J, Kern J (1999).
\newblock Non-stationary spatial modeling.
\newblock \emph{Bayesian Statistics}, 6(1): 761--768.

\bibitem[{Huang et~al.(2011)Huang, Zhang, and Robeson}]{Huang2011}
Huang C, Zhang H, Robeson SM (2011).
\newblock On the validity of commonly used covariance and variogram functions
  on the sphere.
\newblock \emph{Mathematical Geosciences}, 43(6): 721--733.

\bibitem[{Huang et~al.(2021)Huang, Abdulah, Sun, Ltaief, Keyes, and
  Genton}]{huang2021competition}
Huang H, Abdulah S, Sun Y, Ltaief H, Keyes DE, Genton MG (2021).
\newblock Competition on spatial statistics for large datasets.
\newblock \emph{Journal of Agricultural, Biological and Environmental
  Statistics}, 26(4): 580--595.

\bibitem[{Hutchinson(1989)}]{hutchinson1989stochastic}
Hutchinson MF (1989).
\newblock A stochastic estimator of the trace of the influence matrix for
  {L}aplacian smoothing splines.
\newblock \emph{Communications in Statistics-Simulation and Computation},
  18(3): 1059--1076.

\bibitem[{Jeong et~al.(2017)Jeong, Jun, and Genton}]{jeong2017spherical}
Jeong J, Jun M, Genton MG (2017).
\newblock Spherical process models for global spatial statistics.
\newblock \emph{Statistical Science}, 32(4): 501--513.

\bibitem[{Jost(2008)}]{jost2008riemannian}
Jost J (2008).
\newblock \emph{Riemannian Geometry and Geometric Analysis}.
\newblock Springer.

\bibitem[{Lang and Pereira(2021)}]{lang2021galerkin}
Lang A, Pereira M (2021).
\newblock Galerkin--{C}hebyshev approximation of {G}aussian random fields on
  compact riemannian manifolds.
\newblock \emph{arXiv preprint arXiv:2107.02667}.

\bibitem[{Lang and Schwab(2015)}]{lang2015isotropic}
Lang A, Schwab C (2015).
\newblock Isotropic {G}aussian random fields on the sphere: regularity, fast
  simulation and stochastic partial differential equations.
\newblock \emph{The Annals of Applied Probability}, 25(6): 3047--3094.

\bibitem[{Lantu{\'e}joul et~al.(2019)Lantu{\'e}joul, Freulon, and
  Renard}]{Lantuejoul2019}
Lantu{\'e}joul C, Freulon X, Renard D (2019).
\newblock Spectral simulation of isotropic {G}aussian random fields on a
  sphere.
\newblock \emph{Mathematical Geosciences}, 51(8): 999--1020.

\bibitem[{Lee(2013)}]{lee2013smooth}
Lee JM (2013).
\newblock \emph{Introduction to Smooth Manifolds}.
\newblock Springer.

\bibitem[{Lindgren et~al.(2011)Lindgren, Rue, and
  Lindstr{\"o}m}]{lindgren2011explicit}
Lindgren F, Rue H, Lindstr{\"o}m J (2011).
\newblock An explicit link between {G}aussian fields and {G}aussian {M}arkov
  random fields: the stochastic partial differential equation approach.
\newblock \emph{Journal of the Royal Statistical Society: Series B (Statistical
  Methodology)}, 73(4): 423--498.

\bibitem[{Marinucci and Peccati(2011)}]{marinucci2011random}
Marinucci D, Peccati G (2011).
\newblock \emph{Random Fields on the Sphere: Representation, Limit Theorems and
  Cosmological Applications}.
\newblock London Mathematical Society Lecture Note Series. Cambridge University
  Press.

\bibitem[{Nocedal and Wright(2006)}]{nocedal2006numerical}
Nocedal J, Wright S (2006).
\newblock \emph{Numerical Optimization}.
\newblock Springer Science \& Business Media.

\bibitem[{Paciorek and Schervish(2006)}]{paciorek2006spatial}
Paciorek CJ, Schervish MJ (2006).
\newblock Spatial modelling using a new class of nonstationary covariance
  functions.
\newblock \emph{Environmetrics}, 17(5): 483--506.

\bibitem[{Pereira(2019)}]{pereiraPhd2019}
Pereira M (2019).
\newblock Generalized random fields defined on {R}iemannian manfolds: Theory
  and practice.
\newblock Ph.D. thesis, MINES ParisTech, PSL Univeristy.

\bibitem[{Pereira and Desassis(2019)}]{pereira2019efficient}
Pereira M, Desassis N (2019).
\newblock Efficient simulation of {G}aussian {M}arkov random fields by
  {C}hebyshev polynomial approximation.
\newblock \emph{Spatial Statistics}, 31: 100359.

\bibitem[{Pereira et~al.(2020)Pereira, Desassis, Magneron, and
  Palmer}]{pereira2020matrix}
Pereira M, Desassis N, Magneron C, Palmer N (2020).
\newblock A matrix-free approach to geostatistical filtering.
\newblock \emph{arXiv preprint arXiv:2004.02799}.

\bibitem[{Perrin and Senoussi(2000)}]{perrin2000reducing}
Perrin O, Senoussi R (2000).
\newblock Reducing non-stationary random fields to stationarity and isotropy
  using a space deformation.
\newblock \emph{Statistics \& Probability Letters}, 48(1): 23--32.

\bibitem[{Porcu et~al.(2021)Porcu, Furrer, and Nychka}]{porcu202130}
Porcu E, Furrer R, Nychka D (2021).
\newblock 30 years of space--time covariance functions.
\newblock \emph{Wiley Interdisciplinary Reviews: Computational Statistics},
  13(2): e1512.

\bibitem[{Porcu et~al.(2009)Porcu, Mateu, and Christakos}]{porcu2009quasi}
Porcu E, Mateu J, Christakos G (2009).
\newblock Quasi-arithmetic means of covariance functions with potential
  applications to space--time data.
\newblock \emph{Journal of Multivariate Analysis}, 100(8): 1830--1844.

\bibitem[{Powell(1994)}]{powell1994direct}
Powell MJ (1994).
\newblock A direct search optimization method that models the objective and
  constraint functions by linear interpolation.
\newblock In: \emph{Advances in Optimization and Numerical Analysis}, 51--67.
  Springer.

\bibitem[{Rayner et~al.(2020)Rayner, Auchmann, Bessembinder, Br{\"o}nnimann,
  Brugnara, Capponi et~al.}]{rayner2020eustace}
Rayner NA, Auchmann R, Bessembinder J, Br{\"o}nnimann S, Brugnara Y, Capponi F,
  et~al. (2020).
\newblock The {EUSTACE} project: delivering global, daily information on
  surface air temperature.
\newblock \emph{Bulletin of the American Meteorological Society}, 101(11):
  E1924--E1947.

\bibitem[{Rue and Held(2005)}]{rue2005gaussian}
Rue H, Held L (2005).
\newblock \emph{{G}aussian {M}arkov Random Fields: Theory and Applications}.
\newblock Chapman and Hall/CRC.

\bibitem[{Sampson and Guttorp(1992)}]{sampson1992nonparametric}
Sampson PD, Guttorp P (1992).
\newblock Nonparametric estimation of nonstationary spatial covariance
  structure.
\newblock \emph{Journal of the American Statistical Association}, 87(417):
  108--119.

\bibitem[{Simon(2013)}]{simon2013evolutionary}
Simon D (2013).
\newblock \emph{Evolutionary Optimization Algorithms}.
\newblock John Wiley \& Sons.

\bibitem[{Solin and S{\"a}rkk{\"a}(2020)}]{solin2020hilbert}
Solin A, S{\"a}rkk{\"a} S (2020).
\newblock Hilbert space methods for reduced-rank {G}aussian process regression.
\newblock \emph{Statistics and Computing}, 30(2): 419--446.

\bibitem[{Tong(2012)}]{tong2012multivariate}
Tong YL (2012).
\newblock \emph{The Multivariate Normal Distribution}.
\newblock Springer Science \& Business Media.

\bibitem[{Williams and Rasmussen(2006)}]{williams2006gaussian}
Williams CK, Rasmussen CE (2006).
\newblock \emph{Gaussian Processes for Machine Learning}.
\newblock MIT press Cambridge, MA.

\end{thebibliography}

\appendix

\section{Proofs}

\subsection{Proof of Proposition \ref{prop:krig}}
\label{appen:proof_krig}

Consider the vector $\bm X$ given by
\begin{equation*}
	\bm X= 
	\begin{pmatrix}
		\bm Z \\
		\bm Y
	\end{pmatrix}
	=
	\begin{pmatrix}
		\bm I_n &  \bm 0_{np}\\
		\bm M_\DD & \tau\bm I_p
	\end{pmatrix}
	\begin{pmatrix}
		\bm Z \\
		\bm \epsilon
	\end{pmatrix}.
\end{equation*}
Note that $\bm X$ is a centered Gaussian vector with covariance matrix
\begin{equation}
	\begin{aligned}
		\cov[\bm X]
		&=
		\begin{pmatrix}
			\bm I_n &  \bm 0_{np}\\
			\bm M_\DD & \tau\bm I_p
		\end{pmatrix}
		\begin{pmatrix}
			\bm \Sigma & \bm 0_{nn} \\
			\bm 0_{pp} & \bm I_p
		\end{pmatrix}
		\begin{pmatrix}
			\bm I_n &  \bm 0_{np} \\
			\bm M_\DD & \tau\bm I_p
		\end{pmatrix}^T
		=		\begin{pmatrix}
			\bm \Sigma &  \bm\Sigma \bm M_\DD^T\\
			\bm M_\DD\bm \Sigma & \bm M_\DD\bm\Sigma \bm M_\DD^T+\tau^2\bm I_p
		\end{pmatrix}.
	\end{aligned}
	\label{eq:covX}
\end{equation}
Since $\bm X$ is multivariate Gaussian it follows that  the conditional distribution of $\bm Z$ given $\bm Y$ is a Gaussian vector with mean $\e[\bm Z \vert \bm Y]$ and covariance matrix $\cov[\bm Z\vert \bm Y]$  \citep[Theorem 3.3.4]{tong2012multivariate} given by
\begin{equation*}
	\e[\bm Z \vert \bm Y]
	=\bm\Sigma \bm M_\DD^{T}(\bm M_\DD \bm\Sigma \bm M_\DD^T +\tau^2\bm I_p)^{-1}
	\bm Y,
\end{equation*}
\begin{equation*}
	\cov[\bm Z\vert \bm Y]
	=\bm \Sigma - \bm\Sigma \bm M_\DD^{T}(\bm M_\DD \bm\Sigma \bm M_\DD^T +\tau^2\bm I_p)^{-1}\bm M_\DD\bm \Sigma.
\end{equation*}
Then, \Cref{eq:ec,eq:vc} follow from computing $\cov[\bm X]^{-1}$ from~\Cref{eq:covX} and using the fact that $\cov[\bm X]\cov[\bm X]^{-1}=\bm I_{p+n}$.

Finally, recall that since we are dealing with Gaussian vectors, the vector of kriging predictors $\bm Z^*$ coincides with the conditional expectation of the vector $\bm Z_\TT=(Z(x_{p+1}), \dots, Z(x_{p+q}))^T$, given the observations $\bm Y$. Hence, by linearity of the expectation and definition of $\bm M_\TT$, we have
\begin{equation*}
	\bm Z^* = \e[\bm Z_\TT \vert \bm Y] = \e[\bm M_\TT \bm Z \vert \bm Y] =\bm M_\TT \e[\bm Z \vert \bm Y].
\end{equation*}

\vspace{-1em}
\hfill \ensuremath{\Box}

\subsection{Proof of Proposition \ref{prop:logdet}}\label{appen:proof_logdet}

By definition of matrix functions, and denoting by $\lambda_1, \dots, \lambda_n$ the eigenvalues of $\bm B$, we have
\begin{equation*}
	\trace(\log h(\bm B))
	=\sum_{i=1}^n \log h(\lambda_i)=\log\prod_{i=1}^n  h(\lambda_i)=\log \vert h(\bm B) \vert. 
\end{equation*}
The second equality is then a direct consequence of the well-known Hutchinson trace estimator \citep{hutchinson1989stochastic}.
\hfill \ensuremath{\Box}

\section{Intervals of Eigenvalues}\label{appen:interval}
We here show how  intervals containing all eigenvalues of the matrices $\bm S$ and $\bm A$ respectively defined in \Cref{eq:def_S} and \Cref{eq:def_A} can be computed.  Since the matrix $\bm S$ is positive definite, an interval containing its eigenvalues is obtained by considering $[0, \lambda_{\max}(\bm S)]$, where $\lambda_{\max}(\bm S)$ is an upper-bound of the eigenvalues of $\bm S$. This upper-bound can be obtained  by taking $\lambda_{\max}(\bm S)=\sqrt{\trace(\bm S^T\bm S)}$ or by relying on Gershgorin circle theorem \citep{gershgorin1931uber}, which we now recall.

\begin{proposition}
	\label{prop:eigenvalues}
	The eigenvalues of a symmetric matrix $\bm B\in\R^{n\times n}$ with entries $B_{ij}$ are contained in the interval 
	\begin{equation*}
		[\min_{1\le i\le n}(B_{ii}-R_i), \max_{1\le i\le n}(B_{ii}+R_i)],
	\end{equation*}
	where $R_i=\sum_{j\neq i} \vert B_{ij}\vert$, $1\le i\le n$. 
\end{proposition}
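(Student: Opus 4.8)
The plan is to prove the classical Gershgorin localization and then specialize it to the symmetric case, where reality of the spectrum turns the discs into real intervals. First I would take an arbitrary eigenvalue $\lambda$ of $\bm B$ together with a corresponding eigenvector $\bm v = (v_1, \dots, v_n)^T \neq \bm 0$, and let $i$ be an index for which $\vert v_i\vert = \max_{1\le k\le n}\vert v_k\vert$; note $\vert v_i\vert > 0$ since $\bm v \neq \bm 0$.

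Next I would write out the $i$-th component of the eigenvalue equation $\bm B\bm v = \lambda\bm v$, namely $\sum_{j=1}^n B_{ij}v_j = \lambda v_i$, and isolate the diagonal term to obtain $(\lambda - B_{ii})v_i = \sum_{j\neq i} B_{ij}v_j$. Taking absolute values, using the triangle inequality, and dividing by $\vert v_i\vert > 0$ gives
\begin{equation*}
	\vert \lambda - B_{ii}\vert \le \sum_{j\neq i}\vert B_{ij}\vert\,\frac{\vert v_j\vert}{\vert v_i\vert} \le \sum_{j\neq i}\vert B_{ij}\vert = R_i,
\end{equation*}
where the last inequality uses that $\vert v_j\vert/\vert v_i\vert \le 1$ for every $j$ by the choice of $i$. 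Hence $\lambda$ lies in the complex disc centered at $B_{ii}$ with radius $R_i$.

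Finally I would invoke the fact that a real symmetric matrix has only real eigenvalues (the spectral theorem, already used implicitly in the discussion of $\bm S$ in the main text), so that $\lambda \in \R$ and the disc bound becomes $\lambda \in [B_{ii} - R_i,\, B_{ii} + R_i]$. Since this holds for some index $i$ depending on $\lambda$, we have in particular $\lambda \ge B_{ii} - R_i \ge \min_{1\le k\le n}(B_{kk} - R_k)$ and $\lambda \le B_{ii} + R_i \le \max_{1\le k\le n}(B_{kk} + R_k)$, which is the claimed interval. This argument is elementary and essentially free of obstacles; the only point requiring mild care is the strict positivity $\vert v_i\vert>0$ that licenses the division, and the appeal to reality of the spectrum which justifies replacing discs by intervals.

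\hfill \ensuremath{\Box}
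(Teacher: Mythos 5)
Your proof is correct. The paper does not actually prove this proposition: it simply recalls it as the Gershgorin circle theorem with a citation to \citet{gershgorin1931uber}. What you have written is the standard textbook proof of that theorem --- pick the eigenvector component of maximal modulus, isolate the diagonal term in the corresponding row of $\bm B\bm v=\lambda\bm v$, apply the triangle inequality, and divide by $\vert v_i\vert>0$ --- together with the correct observation that symmetry forces $\lambda\in\R$, so the Gershgorin disc collapses to the real interval $[B_{ii}-R_i,\,B_{ii}+R_i]$ and the union of these intervals is contained in $[\min_i(B_{ii}-R_i),\,\max_i(B_{ii}+R_i)]$. All steps are justified, including the division by $\vert v_i\vert$ and the passage from the disc to the interval; there is nothing to correct.
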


Regarding the matrix $\bm A = \tau^2\bm Q + \bm M_\DD^T\bm M_\DD$, lower and upper bounds of its eigenvalues are given in the next proposition.
\begin{proposition}\label{prop:gersh}
	Let $\lambda_{\min}(\bm A)$ (resp. $\lambda_{\max}(\bm A)$) denote some lower (resp. upper) bound of the eigenvalues of the matrix $\bm A$. Then,
	\begin{align*}
		\lambda_{\min}(\bm A) &= \tau^2\bigg(\min_{1\le i\le n} [\sqrt{\bm C}]_{ii}^2\bigg)\bigg(\inf_{\lambda\in [0,\lambda_{\max}(\bm S)]}P(\lambda)\bigg), \\
		\lambda_{\max}(\bm A)& = \tau^2\bigg(\max_{1\le i\le n} [\sqrt{\bm C}]_{ii}^2\bigg)\bigg(\sup_{\lambda\in [0,\lambda_{\max}(\bm S)]}P(\lambda)\bigg) + \bigg(\max_{1\le i\le n}\sum_{k=1}^p [\bm M_\DD]_{ki}\bigg).
	\end{align*}
\end{proposition}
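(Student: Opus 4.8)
The plan is to split $\bm A=\tau^2\bm Q+\bm M_\DD^T\bm M_\DD$ into its two symmetric positive semi-definite summands, bound the spectrum of each separately, and then recombine using Weyl's inequalities $\lambda_{\min}(\bm A)\ge\lambda_{\min}(\tau^2\bm Q)+\lambda_{\min}(\bm M_\DD^T\bm M_\DD)$ and $\lambda_{\max}(\bm A)\le\lambda_{\max}(\tau^2\bm Q)+\lambda_{\max}(\bm M_\DD^T\bm M_\DD)$. Since $\bm M_\DD^T\bm M_\DD$ is positive semi-definite, $\lambda_{\min}(\bm M_\DD^T\bm M_\DD)\ge 0$, which already produces the claimed lower bound once $\lambda_{\min}(\tau^2\bm Q)$ is controlled; the bulk of the work is therefore the two estimates on $\tau^2\bm Q$ and the single upper estimate on $\bm M_\DD^T\bm M_\DD$.

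For $\tau^2\bm Q=\tau^2(\sqrt{\bm C})P(\bm S)(\sqrt{\bm C})^T$, I would use the Rayleigh quotient together with the fact that $\sqrt{\bm C}$ is the diagonal mass-lumping matrix with strictly positive diagonal entries. Substituting $\bm u=(\sqrt{\bm C})^T\bm v$ turns $\bm v^T\bm Q\bm v/\bm v^T\bm v$ into $\bm u^TP(\bm S)\bm u/\bigl(\bm u^T(\sqrt{\bm C})^{-T}(\sqrt{\bm C})^{-1}\bm u\bigr)$, and bounding the denominator between $\|\bm u\|^2/\max_i[\sqrt{\bm C}]_{ii}^2$ and $\|\bm u\|^2/\min_i[\sqrt{\bm C}]_{ii}^2$ gives $\lambda_{\min}(\bm Q)\ge\bigl(\min_i[\sqrt{\bm C}]_{ii}^2\bigr)\lambda_{\min}(P(\bm S))$ and $\lambda_{\max}(\bm Q)\le\bigl(\max_i[\sqrt{\bm C}]_{ii}^2\bigr)\lambda_{\max}(P(\bm S))$, where positive semi-definiteness of $P(\bm S)$ is used for the signs. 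Finally, $P(\bm S)$ has eigenvalues $P(\lambda_1),\dots,P(\lambda_n)$ with each $\lambda_i\in[0,\lambda_{\max}(\bm S)]$ (as $\bm S$ is positive semi-definite with spectrum bounded by $\lambda_{\max}(\bm S)$, see \Cref{appen:interval}), so $\lambda_{\min}(P(\bm S))\ge\inf_{\lambda\in[0,\lambda_{\max}(\bm S)]}P(\lambda)$ and $\lambda_{\max}(P(\bm S))\le\sup_{\lambda\in[0,\lambda_{\max}(\bm S)]}P(\lambda)$. Scaling by $\tau^2>0$ then yields exactly the $\tau^2(\cdots)$ terms in the statement.

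For $\bm M_\DD^T\bm M_\DD$ I would exploit that the $\psi_j$ are the piecewise-linear nodal basis functions: on each simplex of the triangulation they are barycentric-coordinate functions, hence nonnegative and summing to one, so every observation point $\bm x_k\in\mathcal D$ gives $[\bm M_\DD]_{kj}=\psi_j(\bm x_k)\in[0,1]$ and $\sum_j[\bm M_\DD]_{kj}=1$. For any $\bm v$, Cauchy--Schwarz gives $\bigl(\sum_j\psi_j(\bm x_k)v_j\bigr)^2\le\bigl(\sum_j\psi_j(\bm x_k)\bigr)\bigl(\sum_j\psi_j(\bm x_k)v_j^2\bigr)=\sum_j\psi_j(\bm x_k)v_j^2$, and summing over $k$ yields $\|\bm M_\DD\bm v\|^2\le\sum_j v_j^2\sum_{k=1}^p[\bm M_\DD]_{kj}\le\bigl(\max_i\sum_{k=1}^p[\bm M_\DD]_{ki}\bigr)\|\bm v\|^2$, i.e. $\lambda_{\max}(\bm M_\DD^T\bm M_\DD)\le\max_i\sum_{k=1}^p[\bm M_\DD]_{ki}$. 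The main obstacle is precisely this last step: a direct Gershgorin bound applied to $\bm M_\DD^T\bm M_\DD$ would involve its off-diagonal row sums and be weaker and less transparent, so the partition-of-unity/Cauchy--Schwarz argument (and the remark that nonnegativity and the unit row sums of $\bm M_\DD$ hold because each $\bm x_k$ lies in a simplex of the triangulation) is what delivers the clean bound quoted in the proposition. Combining the three estimates via Weyl's inequalities completes the proof.
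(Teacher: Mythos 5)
Your proof is correct and follows the same skeleton as the paper's: split $\bm A$ into $\tau^2\bm Q$ and $\bm M_\DD^T\bm M_\DD$ via Weyl's inequalities, use $\lambda_{\min}(\bm M_\DD^T\bm M_\DD)\ge 0$, bound the spectrum of $\bm Q=\sqrt{\bm C}\,P(\bm S)\,\sqrt{\bm C}^T$ by exploiting the diagonal mass-lumped $\sqrt{\bm C}$ together with the spectral mapping $\lambda_i(P(\bm S))=P(\lambda_i(\bm S))\in[\inf,\sup]$ of $P$ over $[0,\lambda_{\max}(\bm S)]$. Your Rayleigh-quotient substitution $\bm u=\sqrt{\bm C}^T\bm v$ is a slightly more explicit route to the $\min_i[\sqrt{\bm C}]_{ii}^2$ and $\max_i[\sqrt{\bm C}]_{ii}^2$ factors than the paper's terse assertion, but it is the same estimate. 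The one genuinely different ingredient is the bound on $\lambda_{\max}(\bm M_\DD^T\bm M_\DD)$: you use Cauchy--Schwarz combined with the partition-of-unity property $\sum_j\psi_j(\bm x_k)=1$ and $\psi_j\ge 0$, whereas the paper applies Gershgorin's circle theorem (its \Cref{prop:eigenvalues}) to $\bm B=\bm M_\DD^T\bm M_\DD$. One small correction: your claim that a direct Gershgorin bound \q{would be weaker and less transparent} is not accurate here. Because the entries of $\bm B$ are nonnegative, $B_{ii}+R_i=\sum_j B_{ij}=\sum_k[\bm M_\DD]_{ki}\sum_j[\bm M_\DD]_{kj}=\sum_k[\bm M_\DD]_{ki}$ by the unit row sums, so Gershgorin delivers exactly the same bound $\max_i\sum_{k=1}^p[\bm M_\DD]_{ki}$ --- both arguments rest on the same two structural facts (nonnegativity and row sums equal to one), and neither is sharper than the other.
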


\begin{proof}
	To ease the notation, let $\bm B=\bm M_\DD^T\bm M_\DD$. Note that we can take
	\begin{equation*}
		\lambda_{\min}(\bm A)= \tau^2\lambda_{\min}(\bm Q) + \lambda_{\min}(\bm B), \quad
		\lambda_{\max}(\bm A)= \tau^2\lambda_{\max}(\bm Q) + \lambda_{\max}(\bm B),
	\end{equation*}
	where $\lambda_{\min}(\cdot)$ (resp. $\lambda_{\max}(\cdot)$) denotes some lower (resp. upper) bound of the eigenvalues of a matrix. On the one hand, since the matrix $\sqrt{\bm C}$ is diagonal, we can take $\lambda_{\min}(\bm Q)=\min_{1\le i\le n} [\sqrt{\bm C}]_{ii}^2 \lambda_{\min}(P(\bm S))$ and $\lambda_{\max}(\bm Q)=\max_{1\le i\le n} [\sqrt{\bm C}]_{ii}^2 \lambda_{\max}(P(\bm S))$. Recalling then the definition of matrix functions, it is clear that the eigenvalues of $P(\bm S)$ are lower (resp. upper) bounded by the infimum (resp. supremum) of $P$ over an interval containing the eigenvalues of $\bm S$, e.g. $[0,\lambda_{\max}(\bm S)]$.
	
	Finally, noting that $\bm B$ is positive semi-definite, we can take $\lambda_{\min}(\bm B)=0$. Then, \Cref{prop:eigenvalues} and the non-negativity of the entries of $\bm B$ allow us to  get $\lambda_{\max}(\bm B)=\max_{1\le i\le n}(B_{ii}+R_i)$
	where
	\begin{equation*}
		B_{ii}+R_i=B_{ii}+\sum_{j\neq i} B_{ij}=\sum_{j=1}^n B_{ij}=\sum_{j=1}^n\sum_{k=1}^p [\bm M_\DD]_{ki}[\bm M_\DD]_{kj}
		=\sum_{k=1}^p [\bm M_\DD]_{ki}\sum_{j=1}^n[\bm M_\DD]_{kj}.
	\end{equation*}
	Noting then that the rows of $\bm M_\DD$ sum to $1$ (since they correspond to linear interpolation weights), we have $B_{ii}+R_i
	=\sum_{k=1}^p [\bm M_\DD]_{ki}$,
	which ends the proof.
\end{proof}

\section{Integration on Riemannian Manifolds}
\label{appen:rm}
We recall usual formulas related to the computation of integrals defined over Riemannian manifolds. We refer the reader to \citet{jost2008riemannian,lee2013smooth} for further. Let $(\mathcal{D}, g)$ be a compact Riemannian manifold of dimension $d$. Let $(U, \phi)$ denote a coordinate chart of $\mc{D}$, i.e. $U$ is an open subset of $\mathcal{D}$ and $\phi$ is a homeomorphism mapping $U$ to an open subset of $\R^d$. Recall that the integral of a function $f : \mathcal{D} \rightarrow \R$ over $U$ is defined as the quantity
\begin{equation*}
	\int_{U} f dV_g = \int_{\phi(U)} f\circ \phi^{-1}(\bm x) \sqrt{\vert g\vert (\phi^{-1}(\bm x))} d\bm x,
\end{equation*}
where $\vert g\vert (\phi^{-1}(\bm x))$ is the determinant of the metric tensor of $g$ at $\phi^{-1}(\bm x)\in U$, expressed in the coordinate chart $(U,\phi)$. The integral of $f$ over $\mathcal{D}$ is then obtained by gluing together (using a partition of unity) local integrals over a collection of coordinate charts that cover $\mc{D}$.

In particular, assuming now that we have a triangulation of $\mathcal{D}$, the integral of $f$ over $\mc{D}$ can be obtained by summing local integrals over each triangle $T$ of the triangulation. In this case, the diffeomorphism $\phi$ associated to $T$ is the map that sends $T$ to the standard simplex of $\R^d$.

Let $L^2(\mc{D})$ denote the set of square-integrable functions of $(\mc{D},g)$. $L^2(\mc{D})$ is a Hilbert space when equipped with the inner product $(\cdot, \cdot)$ defined by $(f,g)=\int_{\mc{D}} fg dV_g$. Note that for any differentiable functions $f_1$ and $f_2$ we denote by $(\nabla f_1, \nabla f_2)$ the integral over $\mc{D}$ of the function $h: p \mapsto g_p(\nabla f_1(p), \nabla f_2(p))$.

And in turn, given a coordinate chart $(U, \phi)$ of $\mathcal{D}$, the integral of $h$ over $U$ reduces to 
\begin{equation*}
	\int_{U} h dV_g = \int_{\phi(U)} \nabla_{\R^d}(f_1\circ\phi^{-1})(\bm x)^T \bm G(\phi^{-1}(\bm x))^{-1} \nabla_{\R^d}(f_2\circ\phi^{-1})(\bm x)  \sqrt{\vert g\vert (\phi^{-1}(\bm x))} d\bm x,
\end{equation*}
where $\bm G(\cdot)$ denotes the metric tensor at given point of $\mathcal{D}$ and expressed in the coordinate chart $(U, \phi)$,  and $\nabla_{\R^d}$ denotes the usual gradient of functions of $\R^d$.

\section{Galerkin Approximation}
\label{appen:galerkin_approx}
Let $\psi_1, \dots, \psi_n$ denote $n$ linearly independent functions from $\mc{D}$  to $\R$ and let $V_n$ denote their linear span. The Galerkin approximation $-\Delta_n$ of the Laplace--Beltrami operator $-\Delta$ is the endomorphism mapping any $f\in V_n$ to the element $-\Delta_n f \in V_n$ satisfying for any $u\in V_n$, $( -\Delta_n f, u)=( -\Delta f, u)$. This endomorphism is diagonalizable, and shares the same eigenvalues as the scaled stiffness matrix $\bm S$ defined in~\Cref{eq:def_S} \citep{lang2021galerkin}.

\end{document}